\newtheorem{theorem}{Theorem}
\newtheorem{lemma}[theorem]{Lemma}
\newtheorem{proposition}[theorem]{Proposition}
\newtheorem{corollary}[theorem]{Corollary}
\theoremstyle{definition}
\newtheorem{example}[theorem]{Example}
\newcommand\N{\mathbb N}
\newcommand\Q{\mathbb Q}
\newcommand\R{\mathbb R}
\newcommand\C{\mathbb C}
\newcommand{\ep}{\varepsilon}
\newcommand{\ta}{\tau}
\newcommand{\de}{\delta}
\newcommand{\io}{\iota}
\newcommand{\la}{\lambda}
\newcommand{\si}{\sigma}
\newcommand{\Ph}{\Phi}
\newcommand\RR{\R}
\renewcommand{\emptyset}{\varnothing}
\newcommand{\x}{\bar X}
\newcommand{\cx}{[\x]} 
\newcommand{\cH}{\mathcal H}
\newcommand{\pos}{\R_{>0}}
\newcommand{\rxt}{\R\cx^{t\times t}}
\newcommand{\srxt}{\sym\rxt}
\newcommand{\rrxt}{\RR\cx^{t\times t}}
\newcommand{\srrxt}{\sym\rrxt}
\newcommand{\Rtt}{\RR^{t\times t}}
\DeclareMathOperator{\tr}{tr}
\DeclareMathOperator{\sym}{Sym}
\renewcommand\i{\mathbbm i} 
\newcommand\cA{\mathcal A}
\newcommand\cR{\mathcal R}
\title[Pure states and matrix polynomials]
{Pure states, positive matrix polynomials\\and sums of hermitian squares}
\author[Igor Klep]{Igor Klep}
\address{Igor Klep, Univerza v Mariboru, Fakulteta za naravoslovje in matematiko,
Koro\v ska 160, Maribor, Slov\'enie, and Univerza v Ljubljani, Fakulteta za matematiko
in fiziko, 
Jadranska 21, 1111 Ljubljana, Slov\'enie}
\email{igor.klep@fmf.uni-lj.si}
\thanks{Both authors were supported by the French-Slovene partnership project Proteus 20208ZM}
\author[Markus Schweighofer]{Markus Schweighofer}
\address{Markus Schweighofer, Universit\'e de Rennes 1,
Laboratoire de Math\'ematiques,
Campus de Beaulieu,
35042 Rennes cedex,
France}
\email{markus.schweighofer@univ-rennes1.fr}
\subjclass[2000]{Primary 15A48, 11E25, 13J30; Secondary 15A54, 14P10, 46A55}
\date{September 29, 2009}
\keywords{matrix polynomial, pure state,
positive semidefinite matrix, sum of hermitian squares,
Positivstellensatz, archimedean quadratic module, Choquet theory}
\begin{document}
\begin{abstract}
Let $M$ be an archimedean quadratic module of real $t\times t$ matrix polynomials in $n$ variables,
and let $S\subseteq\R^n$ be the set of all points where each element of $M$ is positive semidefinite.
Our key finding is a natural bijection between the set of pure states of $M$ and
$S\times\mathbb P^{t-1}(\R)$. This leads us to conceptual proofs of positivity certificates for
matrix polynomials, including the recent seminal result of Hol and Scherer: 
If a symmetric matrix polynomial is positive definite on $S$, then it belongs to $M$.
We also discuss what happens for nonsymmetric matrix polynomials or
in the absence of the archimedean assumption, and review some of the related classical results.
The methods employed are both algebraic and functional analytic.
\end{abstract}
\maketitle

\section{Introduction}

We write $\N:=\{1,2,\dots\}$, $\Q$, $\R$ and $\C$ for the sets of natural, rational, real and complex numbers, respectively. The complex numbers $\C$ always come equipped with the complex-conjugation
involution. For any matrix $A$ over a ring with involution $\cA$, we denote by $A^*$ its
conjugate transpose. If $A$ is a real matrix, $A^*$ is simply its transpose. Let $\sym\cA^{t\times t}:=
\{A\in\cA^{t\times t}\mid A=A^*\}$ be the set of all \emph{symmetric} $t\times t$ matrices.
Examples of these include \emph{hermitian squares}, i.e.,
elements of the form $A^*A$ for some $A\in\cA^{t\times t}$.

Recall that a matrix $A\in\Rtt$ is called
\textit{positive semidefinite} if it is
symmetric and $\langle Av,v\rangle=v^*Av\ge 0$ for all vectors $v\in\RR^t$, 
$A$
is \textit{positive definite} if it is positive semidefinite and invertible,
and is
called \textit{negative semidefinite} if $-A$ is positive semidefinite.
For matrices $A$ and $B$ of the same size, we write $A\preceq B$
(respectively $A\prec B$) to express that $B-A$ is positive semidefinite
(respectively positive definite).
Geometrically, $A\in\sym\Rtt$ is positive semidefinite if
and only if all of its eigenvalues are nonnegative, $A$ is positive
definite if and only if all of its eigenvalues are positive, and
$A$ is not negative semidefinite if and only if one 
of its eigenvalues is positive. The following algebraic characterizations are easy to prove:

\begin{proposition}\label{prop:0}
Let $A\in\sym\Rtt$.
\begin{enumerate}[\rm(a)]
\item\label{it:0psd}
$A\succeq 0$ if and only if $A$ is a sum of hermitian squares in $\Rtt$;
\item\label{it:0nnsd}
$A\not\preceq0$ if and only if there exist $B_i,C_j\in\Rtt$ such that
$$\sum_iB_i^*AB_i=1+\sum_jC_j^*C_j.$$
\end{enumerate}
\end{proposition}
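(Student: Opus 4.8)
The plan is to prove each biconditional by treating its two implications separately; in each case one direction is a one-line computation resting on the identity $v^*B^*Bv=\|Bv\|^2\ge 0$, and the other uses the spectral theorem for real symmetric matrices.

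For part (a): if $A=\sum_kB_k^*B_k$ with $B_k\in\RR^{t\times t}$, then $v^*Av=\sum_k v^*B_k^*B_kv=\sum_k\|B_kv\|^2\ge 0$ for all $v\in\RR^t$, so $A\succeq 0$. Conversely, if $A\succeq 0$, the spectral theorem gives an orthogonal $U$ and a diagonal matrix $D$ with nonnegative entries $\la_1,\dots,\la_t$ such that $A=UDU^*$; letting $E$ be the diagonal matrix with entries $\sqrt{\la_1},\dots,\sqrt{\la_t}$ and $B:=EU^*$, we get $B^*B=UE^*EU^*=UDU^*=A$, so $A$ is in fact a single hermitian square.

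For part (b): if $\sum_iB_i^*AB_i=1+\sum_jC_j^*C_j$ and, for contradiction, $A\preceq 0$, then for every $v\ne 0$ the left-hand side gives $v^*\bigl(\sum_iB_i^*AB_i\bigr)v=\sum_i(B_iv)^*A(B_iv)\le 0$, while the right-hand side gives $v^*\bigl(1+\sum_jC_j^*C_j\bigr)v=\|v\|^2+\sum_j\|C_jv\|^2>0$; this contradiction forces $A\not\preceq 0$. Conversely, suppose $A$ has a positive eigenvalue $\la>0$ (which, by the characterization recalled above, is exactly the failure of $A\preceq 0$). Fix an orthonormal eigenbasis $v_1,\dots,v_t$ of $A$ with $Av_1=\la v_1$, and set $B_k:=\tfrac1{\sqrt\la}\,v_1v_k^*$ for $k=1,\dots,t$. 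A short computation gives $B_k^*AB_k=\tfrac1\la\,v_k(v_1^*Av_1)v_k^*=v_kv_k^*$, whence $\sum_{k=1}^tB_k^*AB_k=\sum_{k=1}^tv_kv_k^*=1$, the identity matrix. This is the required identity with, say, a single $C_1=0$.

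The only step that needs more than routine bookkeeping is the second implication of part (b): the point is to notice that a positive eigenvalue of $A$ allows one to congruence-transform $A$ into a positive multiple of the identity via rank-one matrices. Once that trick is in hand, both parts follow from the spectral theorem and the nonnegativity of $\|Bv\|^2$.
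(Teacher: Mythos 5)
Your proof is correct and complete. Note that the paper itself offers no proof of this proposition (it is stated as ``easy to prove''), so there is nothing to compare against; your argument --- the spectral theorem for part (a), and for part (b) the congruence of $A$ to the identity via the rank-one matrices $\tfrac1{\sqrt\la}v_1v_k^*$ built from a positive eigenvector --- is the natural one and checks out in every step.
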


The main goal of this article is to explain how this proposition extends to \emph{matrix polynomials}, i.e., elements of the ring $\RR\cx^{t\times t}$ where $\RR\cx$ is the ring of polynomials in $n$
variables $\bar X=(X_1,\dots,X_n)$ with coefficients from $\RR$. Note that in $\RR^{t\times t}$,
every sum of hermitian squares is of course a hermitian square. The reason why we speak of \emph{sums}
of hermitian squares in Proposition \ref{prop:0} is that this is no longer true in
$\RR\cx^{t\times t}$. Note however, that $A\in\RR\cx^{t\times t}$ is a sum of hermitian squares in
$\RR\cx^{t\times t}$
if and only if there is $u\in\N$ and $B\in\RR\cx^{u\times t}$ such that $A=B^*B$.

Let $\cA$ be a ring with \emph{involution} $a\mapsto a^*$ (i.e., $(a+b)^*=a^*+b^*$, $(ab)^*=b^*a^*$ and $a^{**}=a$ for $a,b\in\cA$) and set $\sym\cA:=\{a\in\cA\mid a=a^*\}$. A subset $M\subseteq\sym\cA$ is called a \emph{quadratic module} in $\cA$ if
$$1\in M,\quad M+M\subseteq M\quad\text{and}\quad a^*Ma\subseteq M\text{\ for all $a\in\cA$.}$$
To every $G\subseteq\srxt$, we associate the set
$$S_G:=\{x\in\RR^n\mid\forall g\in G:\, g(x)\succeq 0\}$$
and the quadratic module $M_G$ generated by $G$ in $\rrxt$. That is,
$$M_G=\left\{\sum_{i=1}^Np_i^*g_ip_i\mid N\in\N,g_i\in\{1\}\cup G,p_i\in\rrxt\right\}.$$
In particular, $M_\emptyset$ is the set of all sums of hermitian squares in $\rrxt$.

Given a matrix polynomial $f\in\sym\rrxt$ and $S\subseteq\RR^n$, we write $f\succeq 0$ on $S$ if
for all $x\in S$, $f(x)\succeq 0$. Likewise we use $f\succ 0$, $f\not\preceq 0$.
With this notation, $f\in M_G$ implies $f\succeq0$ on $S_G$.

In the sequel, we investigate how $M_G$ can be used to describe matrix polynomials $f\in\srxt$
with $f\succ0$, $f\succeq0$ or $f\not\preceq0$ on $S_G$. In Section \ref{sec:global}, the case
$G=\emptyset$ is considered; classical results on globally positive semidefinite matrix polynomials
in one or more variables are reviewed, 
and then we turn to nowhere negative semidefiniteness.
We give a sum of hermitian squares representation with denominators
in the one variable case
and prove mostly negative results for the case of matrix polynomials in 
several variables.

Our main results are presented in Section \ref{sec:arch} which is devoted to the case of \emph{compact}
$S_G$. Actually we work under the slightly stronger assumption that the quadratic module $M_G$ is
\emph{archimedean} (which can be enforced by possibly enlarging $G$ without changing $S_G$). Under
this assumption we describe all pure states on $\sym\rrxt$ (extremal linear forms positive with respect
to $M_G$) as being of the form $p\mapsto\langle p(x)v,v\rangle$ for some $x\in S_G$ and $v\in\R^t$.
From this we deduce certificates, in the spirit of Proposition
\ref{prop:0}, for matrix polynomials being nowhere negative semidefinite on $S_G$ or
positive semidefinite on $S_G$ in the spirit of Proposition
\ref{prop:0}. The latter was originally proved by Hol and Scherer \cite{hs}
with entirely different methods.

For a study of positivity of matrix polynomials in \emph{noncommuting} variables, we refer the reader
to Helton and McCullough, see e.g.~\cite{hm}. Burgdorf, Scheiderer and the second author \cite{bss}
investigate pure states and their role in commutative algebra.

\section{Globally positive matrix polynomials}\label{sec:global}

There are various notions of positivity for matrices. Like in Proposition \ref{prop:0}, we
consider positivity of the smallest and largest eigenvalue, respectively.

\subsection{Globally positive semidefinite matrix polynomials}

By Gau\ss ' theorem, every nonnegative univariate real polynomial is a
sum of two squares of real polynomials. The extension to univariate
real matrix polynomials was first given by Jakubovi\v c \cite{ja}
and is in a different form commonly known as the Kalman-Jakubovi\v c-Popov lemma
\cite{aip}.
It is one of the vast number of matrix factorization results obtained
and used in operator and control theory \cite{gks,glr,rr}.
We refer the reader to \cite{aip} for a nice algorithmic proof; see also \cite{dj1}.

\begin{theorem}[Jakubovi\v c]\label{thm:kyp}
For $f\in\sym\RR[Z]^{t\times t}$, the following are equivalent:
\begin{enumerate}[\rm(i)]
\item $f\succeq 0$ on $\RR$;
\item $f$ is a sum of two hermitian squares in $\RR[Z]^{t\times t}$.
\end{enumerate}
\end{theorem}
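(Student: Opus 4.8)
The plan is to prove Theorem~\ref{thm:kyp} by reducing the matrix case to the scalar Fej\'er–Riesz/Gau\ss{} factorization and handling the degeneracy that arises from zeros of $\det f$. The implication (ii)$\Rightarrow$(i) is immediate: if $f=A^*A+B^*B$ with $A,B\in\RR[Z]^{t\times t}$, then for every $x\in\RR$ and $v\in\RR^t$ we have $\langle f(x)v,v\rangle=\|A(x)v\|^2+\|B(x)v\|^2\ge 0$. So the work is entirely in (i)$\Rightarrow$(ii).

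First I would pass to the complex picture. A real symmetric matrix polynomial $f$ with $f\succeq 0$ on $\RR$ extends to a hermitian matrix polynomial on $\C$; the idea is to show $f(Z)=G(Z)^*G(Z)$ for a suitable $G\in\C[Z]^{t\times t}$, where $G^*(Z)$ means conjugating the coefficients (so $G^*(x)=G(x)^*$ for real $x$). Consider $\det f$, a nonnegative univariate polynomial, hence of even degree $2d$, with real roots of even multiplicity and non-real roots in conjugate pairs. The key step is a \emph{spectral factorization}: one wants to write $f=G^*G$ with $\det G$ having all its roots in the closed lower half-plane (say), obtained by distributing the roots of $\det f$. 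Concretely, I would argue by induction on $\deg f$ (or on $\deg\det f$): if $\deg f>0$, pick a root $\al$ of $\det f$ in the closed upper half-plane, choose a nonzero vector $v$ with $f(\al)v=0$ if $\al$ is non-real (or, if $\al=\bar\al$ is real, use that the multiplicity is even together with positive semidefiniteness to extract a nonconstant kernel vector of $f$ near $\al$), and peel off a linear factor: construct an elementary matrix polynomial $E(Z)$ of degree one so that $f=E^*\tilde f E$ with $\deg\tilde f<\deg f$ and $\tilde f\succeq 0$ on $\RR$ still holding. This "root-extraction" lemma — showing the degree genuinely drops while positivity is preserved and the factor is a polynomial (not rational) — is the technical heart, and is where I expect the main obstacle: one must treat the real roots carefully, since there the kernel of $f(\al)$ need not be large, and one exploits that $\langle f(x)v(x),v(x)\rangle$ vanishing to order $\ge 2$ at a real root forces a clean factorization of a Taylor-type expansion.

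Once $f=G^*G$ over $\C[Z]$ is established, I would finish by separating real and imaginary parts: write $G=A+\i B$ with $A,B\in\RR[Z]^{t\times t}$. Then for real $x$, $G(x)^*G(x)=(A(x)-\i B(x))^{\!*}(A(x)+\i B(x))$, and since $f(x)$ is real this equals $A(x)^*A(x)+B(x)^*B(x)+\i\bigl(A(x)^*B(x)-B(x)^*A(x)\bigr)$ with the last term forced to vanish identically; hence $f=A^*A+B^*B$ as an identity of real matrix polynomials, exhibiting $f$ as a sum of \emph{two} hermitian squares. The only remaining point is to confirm that the count is exactly two and not more — this is automatic from the single complex factor $G$ splitting into its two real components, which is precisely the matrix analogue of Gau\ss{}'s two-squares theorem. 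An alternative to the inductive root-peeling, which I would mention, is to invoke an existing spectral factorization theorem from operator/control theory (the Kalman–Yakubovich–Popov circle of results cited in the excerpt, e.g.~\cite{aip,gks}) as a black box for the existence of $G$, and then the real/imaginary decomposition gives (ii) immediately; but the self-contained route via degree induction is the one I would actually carry out, with the real-root case being the step most likely to require care.
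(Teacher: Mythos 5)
The paper does not prove Theorem~\ref{thm:kyp}; it is stated as a classical result of Jakubovi\v c, with \cite{aip} cited for an algorithmic proof (a Gram--matrix / Kalman--Yakubovich--Popov argument). Your proposal is therefore a reconstruction rather than a comparison with the paper's own argument. The overall architecture you choose is sound and classical: produce a complex spectral factorization $f=G^*G$ with $G\in\C[Z]^{t\times t}$ and then write $G=A+\i B$ with $A,B\in\RR[Z]^{t\times t}$, so that $f=A^*A+B^*B$ once one observes that $A^*B-B^*A$ is a real matrix polynomial which must vanish identically, since $\i(A^*B-B^*A)$ has to be real; this also explains cleanly why exactly two hermitian squares suffice. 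The easy direction (ii)$\Rightarrow$(i) is handled correctly.

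The gap, however, is not quite where you flag it. Peeling a degree-one factor at a \emph{real} root $\alpha_0$ is in fact unproblematic: take $0\neq v_0\in\ker f(\alpha_0)$, extend to an orthonormal basis, note that the $(1,1)$-entry $a(Z)=\langle f(Z)v_0,v_0\rangle$ satisfies $a(\alpha_0)=a'(\alpha_0)=0$ (positivity forces a zero derivative at an interior zero), so $(Z-\alpha_0)^2\mid a$, while $(Z-\alpha_0)$ divides the remaining first-row and first-column entries because $f(\alpha_0)v_0=0$; the quotient $\tilde f$ with $f=E^*\tilde f E$, $E=\mathrm{diag}(Z-\alpha_0,1,\dots,1)$, is again positive semidefinite by a continuity argument. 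What fails is your assertion that $\deg\tilde f<\deg f$ after such a peel. The congruence above reduces $\deg\det f$ by $2$ but leaves the lower-right $(t-1)\times(t-1)$ block of $\tilde f$ identical to that of $U^*fU$, so the matrix degree need not drop at all. The induction therefore bottoms out at the case where $\det f$ is a nonzero constant while $f$ is nonconstant --- for instance $f=\left[\begin{smallmatrix}1+Z^2&Z\\Z&1\end{smallmatrix}\right]$ --- and this is not a trivial base case: one still has to prove that a matrix polynomial, positive definite on all of $\RR$ and with constant determinant, factors as $G^*G$, which requires treating the singular leading coefficient (``roots at infinity'') or solving a Riccati/KYP-type equation. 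This is exactly the part that the algorithmic route cited in the paper handles uniformly, and as written your inductive plan does not close.
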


Note that (ii) is equivalent to $f=g^*g$ for some $g\in\RR[Z]^{2t\times t}$, or
$f=\sum_{i=1}^{2t}v_iv_i^*$ for some $v_i\in\RR[Z]^t$ (cf.~also \cite{clr,frs}).

The multivariate version of Gau\ss ' theorem is Artin's solution
to Hilbert's 17th problem \cite{ma,pd}: a nonnegative
multivariate real polynomial is a
sum of squares of real \emph{rational} functions.
A multivariate version of Jakubovi\v c's theorem (and at the same
time the matrix version of Artin's theorem) was obtained by
Gondard and Ribenboim \cite{gr} in 1974 and reproved several times, 
e.g.~\cite{dj2,ps,hin}.

\begin{theorem}[Gondard \& Ribenboim]\label{thm:h17}
For $f\in\srrxt$, the following are equivalent:
\begin{enumerate}[\rm(i)]
\item $f\succeq 0$ on $\RR^n$;
\item $p^2f$ is a sum of
hermitian squares in $\rrxt$ for some nonzero $p\in\RR\cx$.
\end{enumerate}
\end{theorem}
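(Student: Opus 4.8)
The implication (ii)$\Rightarrow$(i) is immediate: if $p^2f=\sum_iB_i^*B_i$ with $B_i\in\rrxt$ and $p\in\RR\cx\setminus\{0\}$, then $f(x)=p(x)^{-2}\sum_iB_i(x)^*B_i(x)\succeq0$ at every $x$ with $p(x)\ne0$, and since such $x$ form a dense subset of $\RR^n$ while $x\mapsto f(x)$ is continuous, $f\succeq0$ on all of $\RR^n$.

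For (i)$\Rightarrow$(ii) the plan is to pass to the rational function field $\RR(\x)$ and diagonalize. Viewing $f$ as a symmetric matrix over $\RR(\x)$ (where the involution $*$ is just transposition), Lagrange's method of completing the square — valid over any field of characteristic $\ne2$, using the hyperbolic-plane substitution when no diagonal entry is available — produces an invertible $U\in\RR(\x)^{t\times t}$ and a diagonal $D=\mathrm{diag}(d_1,\dots,d_t)$ with $d_i\in\RR(\x)$ such that $f=U^*DU$. First I would check that each $d_i$ is a nonnegative rational function: on the nonempty Zariski-open set $W\subseteq\RR^n$ where $U$ is defined and invertible and all $d_j$ are defined, the matrix $f(x)=U(x)^*D(x)U(x)$ is congruent to $D(x)$, hence has the same signature (Sylvester); since $f(x)\succeq0$ this forces $d_i(x)\ge0$ for all $x\in W$. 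Writing $d_i=g_i/h_i$ with $g_i,h_i\in\RR\cx$ and $h_i\ne0$, the polynomial $g_ih_i=d_ih_i^2$ is then $\ge0$ on the dense set $W\cap\{h_i\ne0\}$ (a nonzero polynomial cannot vanish on a nonempty open set), hence $\ge0$ on all of $\RR^n$ by continuity.

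Now Artin's solution of Hilbert's 17th problem (recalled above) applies to each globally nonnegative polynomial $g_ih_i$, giving $g_ih_i=\sum_k r_{ik}^2$ with $r_{ik}\in\RR(\x)$, so $d_i=\sum_k(r_{ik}/h_i)^2$ is a sum of squares in $\RR(\x)$; this covers also the degenerate case $d_i=0$. Collecting these, $D=\sum_\ell E_\ell^*E_\ell$ for suitable diagonal matrices $E_\ell$ over $\RR(\x)$, whence $f=U^*DU=\sum_\ell(E_\ell U)^*(E_\ell U)$ is a sum of hermitian squares in $\RR(\x)^{t\times t}$. Finally, choosing $p\in\RR\cx\setminus\{0\}$ to be a common denominator clearing all entries of all the matrices $E_\ell U$ simultaneously, we obtain $p^2f=\sum_\ell(pE_\ell U)^*(pE_\ell U)$ with $pE_\ell U\in\rrxt$, which is (ii).

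The routine parts are the denominator bookkeeping and the fact that a symmetric matrix over a field of characteristic $\ne 2$ is congruent to a diagonal one. The step deserving most care is the passage from the pointwise hypothesis $f\succeq0$ on $\RR^n$ to the assertion that the $d_i$ are nonnegative as rational functions — that is, the congruence/signature argument on the generic locus together with the density argument — since this is where the real-geometric content of the hypothesis is actually used; everything afterwards reduces to the scalar Hilbert 17th problem applied entrywise.
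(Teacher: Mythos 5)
Your argument is correct and follows essentially the same route as the paper's proof: diagonalize $f$ over the rational function field to get $f=U^*DU$, observe via the hypothesis that the diagonal entries of $D$ are nonnegative rational functions, apply Artin's theorem to each diagonal entry, and clear denominators. You have merely fleshed out the Sylvester-plus-density step and the final denominator bookkeeping, which the paper compresses into the one-line remark that ``by (i), $d$ is positive semidefinite where defined''; the underlying method is identical.
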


\begin{proof} From (ii) it follows that $f\succeq 0$ on $\{x\in\R^n\mid p(x)\neq0\}$ and hence
(i). Conversely, suppose that (i) holds. By diagonalization of quadratic forms over a field, there
exists an invertible matrix $g\in\RR(\x)^{t\times t}$ and a diagonal matrix $d\in\RR(\x)^{t\times t}$ such that $f=g^*dg$. By (i), $d$ is positive semidefinite where defined. By Artin's solution to 
Hilbert's 17th problem, we find a nonzero $p\in\RR\cx$ such that $p^2d$ is a sum of (hermitian)
squares in $\rrxt$. Without loss of generality, we can assume that $p^2$ also clears the denominators
in $g$.
\end{proof}

In the literature cited above, one can find refinements of (ii) at the expense of more complicated
proofs, e.g.~(ii')
$p^2f$ is a sum of squares in the commutative ring $\RR[\x,f]\subseteq\rrxt$
for some nonzero $p\in\RR\cx$. Also, Gondard and Ribenboim \cite{gr} prove a bound on the number of
hermitian squares needed.

\subsection{Nowhere negative semidefinite matrix polynomials}

We now turn to symmetric nonnegative matrix polynomials which are not negative semidefinite globally, i.e., whose evaluations
all have at least one positive eigenvalue.
We start by giving an analog of Proposition \ref{prop:0}\eqref{it:0nnsd}
for \emph{univariate} matrix polynomials. Though this is the perfect counterpart to the well known Theorem \ref{thm:kyp}, it is to the
best of our knowledge an entirely new result.

\begin{theorem}\label{thm:nnsd1var}
For $f\in\sym\RR[Z]^{t\times t}$, the following are equivalent:
\begin{enumerate}[\rm(i)]
\item $f\not\preceq 0$ on $\RR$;
\item 
 there exist $p_i\in\RR[Z]^{t\times t}$ such that
 $
 \sum_ip_i^*fp_i- 1 $ is a sum of hermitian squares.
\end{enumerate}
\end{theorem}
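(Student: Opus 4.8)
The plan is to reduce the matrix statement to the scalar case of Proposition~\ref{prop:0}\eqref{it:0nnsd}-type reasoning combined with a Positivstellensatz-style argument, exploiting the rigidity of the one-variable situation. The implication (ii)$\Rightarrow$(i) is immediate: if $\sum_i p_i^*fp_i=1+\sum_j q_j^*q_j$ for matrix polynomials, then evaluating at any $x\in\RR$ gives $\sum_i p_i(x)^*f(x)p_i(x)\succeq 1\succ 0$, so $f(x)$ cannot be negative semidefinite (otherwise the left side would be negative semidefinite). So the content is (i)$\Rightarrow$(ii).

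For (i)$\Rightarrow$(ii), the first step is to reinterpret $f\not\preceq 0$ on $\RR$ as a statement about the largest eigenvalue: $\lambda_{\max}(f(x))>0$ for every $x\in\RR$. Consider the scalar polynomial auxiliary object obtained from $f$, or better, work with the quadratic module $M$ generated by $f$ in $\RR[Z]^{t\times t}$ and observe that (ii) says exactly $-1\notin M$ fails, i.e. that $M$ contains an element $\succeq 1$; equivalently that the ``matrix Positivstellensatz'' certificate exists. I would try to deduce this from Theorem~\ref{thm:kyp} applied to a cleverly chosen modification of $f$: for instance, since $f(x)$ always has a positive eigenvalue, for each $x$ there is a unit vector $v$ with $v^*f(x)v>0$; one wants to patch these local witnesses together. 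A cleaner route: diagonalize $f$ over $\RR(Z)$ as $f=g^*dg$ with $d=\mathrm{diag}(d_1,\dots,d_t)$, $g\in\RR(Z)^{t\times t}$ invertible; then $f\not\preceq 0$ on $\RR$ forces, at each $x$ (away from poles), some $d_i(x)>0$, hence $\prod_i\big(\text{something}\big)$ — but the $d_i$ are only rational and the sign pattern can vary, so one instead uses that the polynomial $\det(\lambda I - f(Z))$ has, for each $x$, a positive root, i.e. its largest root $\lambda_{\max}(f(Z))$ is a positive (continuous, semialgebraic) function on $\RR$.

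The key technical step I expect to be the main obstacle is converting ``$\lambda_{\max}(f)>0$ everywhere'' into an algebraic identity with \emph{polynomial} (not rational) matrix coefficients and with the ``$-1$'' appearing. The natural attack: the scalar polynomial $q(Z):=$ (a suitable symmetric-function combination of eigenvalues, e.g. built from the characteristic polynomial of $f$) is a univariate real polynomial which is positive on all of $\RR$ precisely when $f\not\preceq 0$ — but that is false in general, since $f$ could have mixed-sign eigenvalues. So instead one must use a ``largest-eigenvalue'' trick at the level of the quadratic module: show that $M$ (generated by $f$ in $\RR[Z]^{t\times t}$) is such that $M+M\subseteq M$, $p^*Mp\subseteq M$, and that $1$ lies in the interior of $M$ in an appropriate sense; failing that, a separating linear functional would give a state vanishing at $1$, contradicting (i) via an evaluation/GNS-type argument — but the archimedean machinery of Section~\ref{sec:arch} is not available here since $\RR$ is noncompact. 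Hence the honest proof must instead mimic the passage from Theorem~\ref{thm:kyp} directly: write $f+\varepsilon I$ type deformations, or exploit that on $\RR$ a matrix polynomial that is somewhere-positive-definite has a bounded-degree hermitian-square representation of $\sum p_i^*fp_i-1$ because the relevant Hankel/moment conditions are finite-dimensional and the failure set is empty. I would carry out the argument by (1) establishing (ii)$\Rightarrow$(i) as above, (2) reducing (i) to $\lambda_{\max}(f)>0$ on $\RR$, (3) replacing $f$ by its restriction to the at-most-$t$-dimensional ``top eigenspace bundle'' and invoking Gau\ss /Jakubovi\v c (Theorem~\ref{thm:kyp}) on the resulting scalar-type data to produce $\sum_i p_i^*fp_i\succeq \delta I$ for some $\delta>0$, and (4) rescaling the $p_i$ by $\delta^{-1/2}$ and moving $\delta^{-1}\sum p_i^*fp_i - I$ into the sum-of-hermitian-squares cone, which is legitimate since that difference is globally positive semidefinite and univariate, hence a sum of hermitian squares by Theorem~\ref{thm:kyp} again.
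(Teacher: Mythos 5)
Your direction (ii)$\Rightarrow$(i) is fine, but the proof of (i)$\Rightarrow$(ii) is not a proof: it is a list of candidate strategies, most of which you yourself note do not work, ending in a four-step plan whose decisive step (3) is unjustified. ``Restricting $f$ to the at-most-$t$-dimensional top eigenspace bundle'' is not a polynomial construction: the eigenvalues of $f(x)$ are branches of an algebraic function, the top eigenspace varies only semialgebraically and can jump in dimension where eigenvalues cross, and there is no way to extract from it polynomial matrices $p_i$ with $\sum_i p_i^*fp_i\succeq\delta I$ on $\RR$; Theorem~\ref{thm:kyp} applies only to matrix polynomials that are already globally positive semidefinite, so it cannot be ``invoked on the resulting scalar-type data'' to manufacture such a certificate. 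You correctly identify the conversion of ``$\lambda_{\max}(f)>0$ everywhere'' into a polynomial identity as the main obstacle, and then you do not overcome it. (Your step (4) would be fine \emph{if} step (3) were done.)

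The two ingredients you are missing are precisely where the one-variable rigidity you allude to actually enters. First, for diagonal $f=\mathrm{diag}(f_1,\dots,f_t)$, condition (i) says $S_{\{-f_1,\dots,-f_t\}}=\emptyset$, and in one variable the quadratic module generated in $\RR[Z]$ by finitely many polynomials with compact (here empty) associated set contains every polynomial positive on that set \cite[Theorem 6.3.8]{pd}; this gives a scalar identity $-1=\sum_i(-f_i)\sum_jg_{ij}^2-\sum_jg_{0j}^2$, which is lifted to the matrix level via the observation $f_i=\sum_kE_{ik}^*fE_{ik}\in M_{\{f\}}$. Second, for non-diagonal $f$ one does not diagonalize over $\RR(Z)$ (you rightly abandon that because of denominators and varying sign patterns) but uses the denominator-free polynomial LDU decomposition \cite[Proposition 8]{sm2}, which produces finitely many diagonal $D_\ell=C_\ell^*(-f)C_\ell$ such that $-f(x)\succeq0$ iff all $D_\ell(x)\succeq0$, reducing to the diagonal case. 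Without a substitute for these two steps, the argument has a genuine gap.
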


\begin{proof}
It is clear that (ii) $\Rightarrow$ (i), cf.~Proposition \ref{prop:0}\eqref{it:0nnsd}.

To prove the converse, suppose first that $f$ is diagonal, 
say $f=\left[\begin{smallmatrix} f_1 \\ & \ddots \\ & & f_t
\end{smallmatrix}\right]$. By assumption (i), $S_{\{-f_1,\ldots,-f_t\}}=\emptyset$ is compact.
Since we are in the univariate case, this implies that the quadratic
module $M_{\{-f_1,\ldots,-f_t\}}\subseteq\RR[Z]$ contains 
all polynomials positive on $S_{\{-f_1,\ldots,-f_t\}}$ \cite[Theorem 6.3.8]{pd}.
In particular, with $f_0:=-1$, there are $g_{ij}\in\RR[Z]$ satisfying 
\begin{equation}\label{eq:prel1}
-1=  \sum_{i=0}^t (-f_i)\sum_j g_{ij}^2.
\end{equation}

Observe that for each $i$, 
\begin{equation}\label{eq:prel2}
f_i=\sum_{k=1}^t E_{ik}^* fE_{ik} \in M_{\{f\}}
,
\end{equation}
where $E_{jk}$ are the $t\times t$ matrix units.
Thus \eqref{eq:prel1} implies
\begin{equation}\label{eq:prel3}
\sum_{i,k=1}^t \sum_{j} (E_{ik} g_{ij})^*  f  (E_{ik}  g_{ij}) -1 =
\sum_{i=1}^t \sum_j g_{ij}^* f_i g_{ij} -1 = 
\sum_j g_{0j}^2 
\end{equation}
is a sum of hermitian squares.

\def\rztt{\R[Z]^{t\times t}}
Now suppose $f$ is not necessarily diagonal. By the version of the 
LDU decomposition for matrix polynomials given in \cite[Proposition 8]{sm2},
there exist diagonal matrices $D_\ell\in\rztt$, and matrices $C_{\ell}\in\rztt$, 
$\ell=1,\ldots,m$, satisfying
\begin{enumerate}[\rm (a)]
\item
$D_\ell=C_{\ell}^* (-f) C_{\ell}$,
\item
for each $x\in\R$, $-f(x)\succeq0$ if and only if for all $\ell$, $D_\ell(x)\succeq0$.
\end{enumerate}

From (b) it follows that the diagonal matrix $\left[\begin{smallmatrix} -D_1 \\ & \ddots \\ & & -D_m
\end{smallmatrix}\right]$ is nowhere negative semidefinite. If $D_\ell$ is the diagonal
matrix with entries $d_{j,\ell}$, $j=1,\ldots, t$, then again by 
\cite[Theorem 6.3.8]{pd} we
deduce $-1\in M_{\{d_{j,\ell}\mid j=1,\ldots,t,\, \ell=1,\ldots,m\}}$.
Like in \eqref{eq:prel2} we have $d_{j,\ell}\in
 M_{\{D_\ell\}}\subseteq M_{\{D_1,\ldots,D_m\}}$ for all $j,\ell$. 
 Since $M_{\{D_1,\ldots,D_m\}}\subseteq
 M_{\{-f\}}$ by (a), we conclude as in \eqref{eq:prel3} that $-1\in M_{\{-f\}}$.
\end{proof}

Unlike in the univariate case or 
for Proposition \ref{prop:0}\eqref{it:0psd} where a satisfactory
statement on the level of \emph{multivariate} 
matrix polynomials has been given
in the previous subsection (see Theorem \ref{thm:h17}),
there does not seem to exist a straightforward extension of 
Proposition \ref{prop:0}\eqref{it:0nnsd} to the multivariate case.

\begin{example}\label{ex:22}
Consider $f\in\sym\RR\cx^{2\times 2}$.
We have $f\not\preceq0$ on $\RR^n$ if and only if $\tr f(x)>0$ or $\det f(x)<0$
for all $x\in\RR^n$.
\begin{enumerate}[(a)]
\item\label{part:22diag} Let $f$ be diagonal.
Then $f\not\preceq 0$ on $\RR^n$ if and only if there exist $p_i\in\RR\cx^{2\times 2}$
such that $\sum_ip_i^*fp_i\in 1+M_\emptyset$. Indeed, the implication $(\Leftarrow)$
is easy (cf.~Proposition~\ref{prop:0}). For the converse implication, suppose
$f=\begin{bmatrix}a&0\\0&c\end{bmatrix}\not\preceq0$ on $\RR^n$. Then
$a>0$ on $S_{\{-c\}}$ and therefore $p^2a=1+\si- \ta c$ for some $p\in\RR\cx$ and sums of squares
$\si,\ta\in\RR\cx$ by Krivine's Positivstellensatz (see, e.g.~\cite[Chapter 2]{ma} or
\cite[Section 4.2]{pd}).
Obviously, there exist diagonal $h_i\in\RR\cx^{2\times 2}$ such that
$\sum_ih_i^*fh_i=\begin{bmatrix}p^2a&0\\0&\ta c\end{bmatrix}$. Now
$$\sum_ih_i^*fh_i+\sum_i\begin{bmatrix}0&1\\1&0\end{bmatrix}^*h_i^*fh_i
\begin{bmatrix}0&1\\1&0\end{bmatrix}=\begin{bmatrix}1+\si&0\\0&1+\si\end{bmatrix}
\in 1+M_\emptyset.
$$
\item If $f=\begin{bmatrix}a&b\\b&c\end{bmatrix}$, then
\begin{equation}\label{eq:22}
\begin{bmatrix}
1&-b\\0&a
\end{bmatrix}^*
f
\begin{bmatrix}
1&-b\\0&a
\end{bmatrix}
+
\begin{bmatrix}
0&1\\-c&b
\end{bmatrix}^*
f
\begin{bmatrix}
0&1\\-c&b
\end{bmatrix}
=\begin{bmatrix}
 \tr f&0\\
 0&(\tr f)(\det f)
 \end{bmatrix}.
\end{equation}
Unable to settle the general case, we assume $\tr f(x)\neq0$ for all
$x\in\RR^n$. Then the diagonal matrix on the right
hand side of \eqref{eq:22} is nowhere negative semidefinite on $\RR^n$. By part
\eqref{part:22diag} above, we obtain $h_i\in\RR\cx^{2\times 2}$ such that
$\sum_ih_i^*fh_i\in1+M_\emptyset$.
\end{enumerate}
\end{example}

\begin{example}\label{ex:33}
The diagonal matrix
$$f:=\begin{bmatrix}X_1&0&0\\0&X_2&0\\0&0&X_1X_2+1\end{bmatrix}\in
\sym\RR\cx^{3\times 3}$$
satisfies $f\not\preceq0$ on $\R^2$ and yet there do not exist
$p_i\in\RR\cx^{2\times 2}$ with $\sum_ip_i^*fp_i\in1+M_\emptyset$.
This example is inspired by \cite[Example 7.3.2(i)]{ma} which is a modification
of the Jacobi-Prestel example \cite[Example 6.3.1]{pd}.

By way of contradiction, assume $\sum_ip_i^*fp_i=1+q$ where $q\in M_\emptyset$.
Extracting the top left entry on both sides of this equation, we get sums of squares
$\si_1,\si_2,\si_3,\ta\in\RR[X_1,X_2]$ with
$$\si_1X_1+\si_2X_2+\si_3(X_1X_2+1)=1+\ta.$$
In particular, $-1$ lies in the quadratic module generated in $\RR[X_1,X_2]$
by $-X_1$, $-X_2$ and $-X_1X_2-1$, contradicting the existence
of a semi\-ordering in $\RR[X_1,X_2]$ containing these three polynomials
(cf.~\cite[Example 7.3.1]{ma}).
\end{example}

We will see that a sum of hermitian squares representation with denominators
(and weights) \emph{does} exist for matrix polynomials nonnegative
on a compact set with archimedean corresponding quadratic module (see
Subsection \ref{subsec:archNN}).
This seems to mimic the situation for polynomials in noncommutative
variables studied e.g.~in \cite{ks}, where a \textit{Nirgendsnegativsemidefinitheitsstellensatz} describing nonnegativity on a bounded set has been given,
while the global case is still an open problem; see
\cite[Open problem 3.2]{ks} for a precise formulation.

\section{Archimedean quadratic modules of matrix polynomials}\label{sec:arch}

$C^*$-algebras $\cA$ enjoy the following boundedness property: for all $a\in\cA$ there is an $N\in\N$
such that $N-a^*a$ is a sum of hermitian squares (actually a hermitian square). In this section, we
try to mimic this boundedness property in an algebraic context for other rings with involution.
In the rigid context of (matrix) polynomials, sums of hermitian squares have of course
to be replaced by a general quadratic module.

\subsection{Archimedean quadratic modules}

A quadratic module $M$ of a ring with involution $\cA$ is said to be \textit{archimedean} if 
\begin{equation}\label{eq:arch}
\forall a\in\cA \ \exists N\in\N:\; N-a^*a \in M.
\end{equation}
To a quadratic module $M\subseteq\sym\cA$ we associate its 
\textit{ring of bounded elements} 
$$
H_M(\mathcal A):=\{a\in\cA\mid  \exists N\in\N:\; N-a^*a \in M\}.
$$
A quadratic module $M\subseteq\sym\cA$ is thus archimedean if
and only if $H_M(\cA)=\cA$. 
The name \emph{ring} of bounded elements
is justified by the following proposition originally
due to Vidav \cite{vi}; see also \cite{ci} for a more
accessible reference:

\begin{proposition}[Vidav]\label{prop:vidav}
Let $\cA$ be a ring with involution, $\frac12\in\cA$ and
$M\subseteq\sym\cA$ a quadratic module. Then $H_M(\cA)$ is a subring of $\cA$ and is closed under the involution.
\end{proposition}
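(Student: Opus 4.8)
The plan is to verify directly that $H_M(\cA)$ contains $0$ and $1$ and is closed under negation, addition, multiplication and the involution, by writing down explicit algebraic identities that reduce each required membership to the quadratic module axioms for $M$. Two elementary facts will be used throughout, and I would record them at the outset: first, since $M+M\subseteq M$, one has $Nm\in M$ for every $m\in M$ and $N\in\N$; second, since $\tfrac12\in\cA$, every nonnegative dyadic rational lies in $M$ and $M$ is stable under multiplication by nonnegative dyadic rationals — indeed $\tfrac14m=(\tfrac12)^*m(\tfrac12)\in M$, and one bootstraps to arbitrary negative powers of two using $M+M\subseteq M$. (Here one also checks that $\tfrac12$, hence each $\tfrac1{2^k}$, is central in $\cA$.)

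Negation is immediate from $(-a)^*(-a)=a^*a$, and $0,1\in H_M(\cA)$ because $1\in M$ and $0=0^*\cdot1\cdot0\in M$. For closure under addition I would use, for $N-a^*a\in M$ and $N-b^*b\in M$,
$$4N-(a+b)^*(a+b)=2(N-a^*a)+2(N-b^*b)+(a-b)^*(a-b)\in M,$$
the last summand being a hermitian square. For multiplication, from the same hypotheses,
$$N^2-(ab)^*(ab)=N(N-b^*b)+b^*(N-a^*a)b\in M,$$
using that $N(N-b^*b)\in M$ and that $M$ is closed under $c\mapsto c^*(\cdot)c$ with $c=b$.

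The only delicate point, and the place where $\tfrac12\in\cA$ is genuinely needed, is closure under the involution. Given $N-a^*a\in M$, conjugating by $a$ yields $a(N-a^*a)a^*=N(aa^*)-(aa^*)^2\in M$; completing the square,
$$N(aa^*)-(aa^*)^2=\frac{N^2}{4}-\Bigl(\frac N2-aa^*\Bigr)^2,$$
so the symmetric element $c:=\tfrac N2-aa^*$ satisfies $\tfrac{N^2}4-c^2\in M$, hence (adding the dyadic rational $\tfrac{3N^2}4\in M$) $N^2-c^*c\in M$ and $c\in H_M(\cA)$. It remains to pass from a bound on $c^2$ to bounds on $\pm c$, i.e.\ to produce $M_0\in\N$ with $M_0\pm c\in M$: choosing $M_0=2^k$ a power of two with $M_0\ge N$ and using
$$M_0-c=\frac1{2M_0}\bigl[(M_0^2-c^2)+(M_0-c)^2\bigr]$$
— legitimate since $\tfrac1{2M_0}=\tfrac1{2^{k+1}}\in\cA$ is central and maps $M$ into $M$, while $(M_0^2-c^2)+(M_0-c)^2\in M$ — gives $M_0-c\in M$, and the same identity applied to $-c$ gives $M_0+c\in M$. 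Since $aa^*=\tfrac N2-c$, one concludes $N'-aa^*=(N'-\tfrac N2-M_0)+(M_0+c)\in M$ for any integer $N'\ge\tfrac N2+M_0$, i.e.\ $a^*\in H_M(\cA)$.

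I expect this last "square‑root" step — extracting $M_0\pm c\in M$ from a bound on $c^2$, which forces division by $2M_0$ and therefore the presence of $\tfrac12$ in $\cA$ — to be the one real obstacle; the remaining verifications are routine manipulations with the quadratic module axioms, and none of them requires $\cA$ to be commutative.
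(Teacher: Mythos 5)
Your proof is correct. The paper itself does not prove this proposition (it cites Vidav and Cimpri\v c for the argument), but your verification follows exactly the standard route those references take: the identities $4N-(a+b)^*(a+b)=2(N-a^*a)+2(N-b^*b)+(a-b)^*(a-b)$ and $N^2-(ab)^*(ab)=N(N-b^*b)+b^*(N-a^*a)b$ for the ring operations, conjugation $a(N-a^*a)a^*$ plus completing the square for the involution, and the key identity $2M_0(M_0\mp c)=(M_0^2-c^2)+(M_0\mp c)^2$ to extract linear bounds from quadratic ones --- which is indeed the one place where $\tfrac12\in\cA$ is essential. All the identities and the dyadic-rational bookkeeping check out.
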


In case $\cA$ is an $\R$-algebra, it suffices to check 
the archimedean condition \eqref{eq:arch} on a set of algebra generators.

\begin{lemma}\label{lem:arch}
A quadratic module $M\subseteq\srxt$ is archimedean if and only if
there exists $N\in\N$ with $N-\sum_iX_i^2\in M$.
\end{lemma}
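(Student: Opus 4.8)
The plan is to prove the two implications separately, with all the substance in ``$\Leftarrow$''. The forward direction is immediate: assuming $M$ is archimedean, I would apply the defining condition \eqref{eq:arch} to the matrix polynomial $a=X_iI_t$ for each $i$, producing $N_i\in\N$ with $N_i-X_i^2\in M$, and then add these $n$ memberships (using $M+M\subseteq M$) to conclude $\bigl(\sum_iN_i\bigr)-\sum_iX_i^2\in M$.

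For the converse, I would assume $N-\sum_iX_i^2\in M$ and show $H_M(\cA)=\cA$ with $\cA=\rrxt$. The crucial step is to invoke Vidav's Proposition~\ref{prop:vidav}: it guarantees that $H_M(\cA)$ is a subring of $\cA$ closed under the involution, and since $N'-r^2\in M$ for $r\in\R$ and $N'\ge r^2$, the ring $H_M(\cA)$ contains $\R$ and is therefore an $\R$-subalgebra of $\cA$. As $\rrxt$ is generated as an $\R$-algebra by the matrix units $E_{jk}$ ($1\le j,k\le t$) together with the scalar matrices $X_iI_t$ ($1\le i\le n$), it then suffices to put each of these finitely many elements into $H_M(\cA)$. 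For a matrix unit I would use $E_{jk}^*E_{jk}=E_{kk}$ together with the fact that $1-E_{kk}$ is positive semidefinite, hence a sum of hermitian squares by Proposition~\ref{prop:0}\eqref{it:0psd}, so that $1-E_{jk}^*E_{jk}\in M_\emptyset\subseteq M$. For $X_iI_t$ I would add the sum of hermitian squares $\sum_{j\ne i}X_j^2$ to the hypothesis $N-\sum_jX_j^2\in M$, obtaining $N-X_i^2\in M$, i.e.\ $N-(X_iI_t)^*(X_iI_t)\in M$. This exhausts the generators, so $H_M(\cA)=\cA$.

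The hard part here is really only the reduction itself: the archimedean property is, a priori, an infinite family of membership conditions indexed by all $a\in\cA$, and what makes the lemma work is that Vidav's theorem (Proposition~\ref{prop:vidav}) collapses this to the finitely many conditions on a set of algebra generators. Everything after that is a short matrix manipulation; the one point worth flagging is the input from Proposition~\ref{prop:0}\eqref{it:0psd}, namely that a positive semidefinite \emph{constant} matrix like $1-E_{kk}$ is a sum of hermitian squares, which is precisely what lets the matrix units enter the ring of bounded elements.
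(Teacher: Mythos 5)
Your proof is correct and follows essentially the same route as the paper: reduce to algebra generators via Vidav's Proposition~\ref{prop:vidav}, handle $X_i$ by adding $\sum_{j\neq i}X_j^2$ to the hypothesis, and handle the matrix units via $E_{jk}^*E_{jk}=E_{kk}$ and $1-E_{kk}=\sum_{l\neq k}E_{ll}^*E_{ll}\in M$ (the paper writes this sum of hermitian squares explicitly rather than citing Proposition~\ref{prop:0}). Your remark that $\R\subseteq H_M(\cA)$, making it an $\R$-subalgebra, is a small but legitimate detail the paper leaves implicit.
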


\begin{proof}
The ``only if'' direction is obvious. For the converse, observe
that $\rxt$ is generated as an $\R$-algebra by $\x$ and
the matrix units $E_{ij}$, $i,j=1,\ldots,t$. By assumption,
$$
N-X_j^2= (N-\sum_iX_i^2 ) + \sum_{i\neq j}X_i^2\in M,
$$
so $X_j\in H_M(\rrxt)$ for every $j$. 
On the other hand, $E_{ij}^*E_{ij}= E_{jj}$ and
thus 
$$1-E_{ij}^*E_{ij}=\sum_{k\neq j} E_{kk}^*E_{kk} \in M.$$
Hence by Proposition \ref{prop:vidav}, $H_M(\rxt)=\rxt$ so $M$ is archimedean.
\end{proof}

\subsection{Pure states}

In functional analysis, the concept of pure states is well-established, see 
e.g.~\cite[Sections 1.6 and 1.7]{ar} for the classical application to $C^*$-algebras and their 
representations. Here we adopt these ideas to matrix polynomials.

Let $G\subseteq\srxt$. A linear form $L:\srxt\to\RR$ is called a \emph{state}
on $(\srxt,M_G)$ if $L(M_G)\subseteq\RR_{\ge0}$ and $L(1)=1$. A state $L$ is called \emph{pure} if it is
an extreme point of the convex set of all states, i.e., it is not a proper convex combination of two
states other than $L$.

We now come to the central result of this article. It is a matrix polynomial version of the well-known
theorem stating that for every pure state $L$ on a $C^*$-algebra $\cA$ there exists
a unit vector $v$ in a Hilbert space $\mathcal H$ and an
irreducible $*$-representation $\pi:\cA\to\mathcal B(\mathcal H)$ such that
$L(a)=\langle\pi(a)v,v\rangle$ for all $a\in\cA$ (see e.g.~\cite[Theorem 1.6.6]{ar}).

\begin{theorem}\label{thm:pure}
Suppose $G\subseteq\srxt$ and $M_G$ is archimedean.
For each pure state $L$ on $(\srxt,M_G)$, there exists $x\in S_G$ and a unit vector
$v\in\R^t$ such that
$$
L(p)=\langle p(x)v,v\rangle\qquad\text{for all $p\in\srxt$.}$$
\end{theorem}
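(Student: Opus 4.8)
The plan is to mimic the GNS construction together with the classical characterization of pure states on $C^*$-algebras, but adapted to the rigid polynomial setting where the archimedean assumption on $M_G$ supplies the crucial continuity. First I would show that a state $L$ on $(\srxt, M_G)$ automatically extends to a positive linear functional on all of $\rrxt$: given $a \in \rrxt$, decompose $a$ via the matrix units and the involution into symmetric pieces, or more directly observe that $a + a^* \in \srxt$ and $\i(a - a^*)$ would require complexification, so instead I work with the real symmetric framework throughout and simply use that any $p \in \rrxt$ can be bounded: by the archimedean property there is $N \in \N$ with $N - p^*p \in M_G \subseteq \srxt$, hence $L(p^*p) \le N$. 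This boundedness is what lets us define the pre-Hilbert seminorm $\langle p, q \rangle_L := L(q^*p)$ — wait, this needs $q^*p$ symmetric, which fails in general, so the honest approach is to pass to the full $*$-algebra: extend $L$ to $\rrxt \otimes \C$ (or argue entirely with the real structure, decomposing $q^*p = \frac12(q^*p + p^*q) + \frac12(q^*p - p^*q)$ and noting the antisymmetric part must be handled by a polarization-type identity valid because $L$ is $M_G$-positive). I would set this up carefully: $L$ extended to a $*$-linear functional, the Cauchy–Schwarz inequality $|L(q^*p)|^2 \le L(p^*p)L(q^*q)$ following from positivity of $L$ on hermitian squares, and then the GNS space $\cH_L := \rrxt / N_L$ where $N_L = \{p \mid L(p^*p) = 0\}$, completed to a Hilbert space.

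The second step is to show $\cH_L$ is \emph{finite-dimensional}, in fact of dimension at most $t$. Here is the key algebraic input: the matrix units $E_{ij} \in \rrxt$ act on $\cH_L$ by left multiplication, and they satisfy the relations of a $t \times t$ matrix algebra; moreover $\sum_i E_{ii} = 1$ and each $E_{ii}$ is a self-adjoint idempotent in the GNS representation $\pi_L$. Thus $\cH_L = \bigoplus_i \pi_L(E_{ii})\cH_L$, and the spaces $\pi_L(E_{ii})\cH_L$ are all unitarily equivalent (via the partial isometries $\pi_L(E_{ij})$), so $\dim \cH_L = t \cdot \dim \pi_L(E_{11})\cH_L$. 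Now I use that $L$ is \emph{pure}: purity of $L$ should force the representation $\pi_L$ to be irreducible, and irreducibility of a representation of the matrix algebra $M_t(\RR[\x])$ — combined with the fact that $\RR[\x] \cdot 1 = Z(\rrxt)$ maps into the center of the von Neumann algebra generated by $\pi_L$, which for an irreducible representation is scalar — forces $\pi_L(\RR[\x])$ to consist of scalars. Concretely, for each $j$, the element $X_j$ is bounded (Lemma \ref{lem:arch}), central, self-adjoint, so $\pi_L(X_j)$ is a bounded self-adjoint operator commuting with everything in $\pi_L(\rrxt)$; by Schur/irreducibility it is a scalar $x_j \in \RR$. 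Then $\pi_L(E_{11})\cH_L$ carries an irreducible representation of the commutative algebra generated by the image, hence is one-dimensional, giving $\dim\cH_L = t$, and $\pi_L$ is unitarily equivalent to the defining representation $p \mapsto p(x)$ on $\RR^t$ with $x = (x_1, \dots, x_n)$. Finally, $L(p) = \langle \pi_L(p)\xi, \xi\rangle$ for the cyclic vector $\xi$, which under this equivalence becomes $\langle p(x)v, v\rangle$ for a unit vector $v \in \RR^t$.

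The third step is to check $x \in S_G$, i.e.\ $g(x) \succeq 0$ for every $g \in G$. Since $g \in M_G$, for any $w \in \RR^t$ the functional value... actually more simply: $\langle g(x)v, v\rangle = L(g) \ge 0$ only gives nonnegativity against the single vector $v$. To get $g(x) \succeq 0$ fully, I use that for any unit vector $w \in \RR^t$ there is $a \in \RR^{t\times t} \subseteq \rrxt$ (a constant matrix polynomial) with $av = $ a scalar multiple of $w$, er — rather: $a^* g a \in M_G$ for all constant $a$, so $0 \le L(a^* g a) = \langle g(x) a v, a v\rangle$, and as $a$ ranges over constant matrices $av$ ranges over all of $\RR^t$ (since $v \ne 0$), giving $g(x) \succeq 0$.

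\textbf{Main obstacle.} The delicate point is deducing \emph{irreducibility} of $\pi_L$ from \emph{purity} of $L$, and conversely making the GNS construction work with the real symmetric space $\srxt$ rather than a full complex $C^*$-algebra — the pure-state $\leftrightarrow$ irreducible-representation correspondence is standard for $C^*$-algebras but here $\rrxt$ is not a $C^*$-algebra (no completeness, not even a Banach algebra a priori), so I expect the heart of the proof to be: (a) showing the GNS representation is by \emph{bounded} operators (this is where archimedeanness of $M_G$ is essential, bounding $\pi_L(X_j)$ and hence all of $\pi_L(\rrxt)$), so that one can pass to the $C^*$-completion of $\pi_L(\rrxt)$ and import the classical theory; and (b) checking that $L$ being an extreme point among states on $(\srxt, M_G)$ translates into $L$ being pure on that $C^*$-completion. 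Step (b) requires relating convex decompositions of $L$ to positive operators in the commutant — a Radon–Nikodym-type argument — and making sure no decompositions are "lost" or "gained" in passing to the completion. I would expect to invoke or reprove a version of \cite[Theorem 1.6.6]{ar} in this slightly more general algebraic setting. Once boundedness is in hand, the reduction to $\dim \cH_L = t$ via the matrix units and the centrality of the $X_j$ is essentially mechanical.
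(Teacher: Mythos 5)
Your outline follows the same architecture as the paper's proof: a GNS construction made possible by the archimedean property (which bounds $L(p^*p)$ and hence the left-multiplication operators), purity forcing the commutant of the representation to be trivial, centrality and self-adjointness of the $X_j$ collapsing the representation to evaluation at a point $x$, and the constant-matrix trick $0\le L(a^*ga)=\langle g(x)av,av\rangle$ to get $x\in S_G$ (your third step is verbatim the paper's). But the proposal leaves the crux unproved. You correctly identify "purity $\Rightarrow$ irreducibility" as the main obstacle and then defer it to a hoped-for analogue of \cite[Theorem 1.6.6]{ar} on a $C^*$-completion of $\pi_L(\rrxt)$. That is precisely the step the paper does \emph{not} outsource: it argues directly that any nontrivial projection $T$ in the commutant satisfies $Tw\ne0\ne(1-T)w$ by cyclicity of $w$, so the compressions $L_i(f)=\langle\pi(f)Tw,Tw\rangle/\|Tw\|^2$ and its complement are states on $(\srxt,M_G)$ exhibiting $L$ as a proper convex combination; purity then forces $Tw=\la w$ with $\la=\|Tw\|^2$, contradicting $\la\in\{0,1\}$. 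This is short, needs no Radon--Nikodym theory and no completion, and avoids the genuine extra work your route would require, namely checking that convex decompositions of the transported state on the $C^*$-completion correspond exactly to decompositions into $M_G$-positive states on $\srxt$.

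The second unresolved point is the real-versus-complex wavering. The paper commits to complexification at the outset: it extends $L$ to $\C\cx^{t\times t}$ by $L(p+\i q)=\frac12(L(p+p^*)+\i L(q+q^*))$, verifies positivity on the complexified quadratic module $M_G^\C$ and its archimedeanness, runs the whole argument over $\C$ (where the spectral-theorem reduction of a self-adjoint commutant element to projections, Schur's lemma, \cite[Theorem 3.1]{la} for $\ker\pi=I^{t\times t}$, and the uniqueness of the irreducible representation of $\C^{t\times t}$ all apply cleanly), and only at the end descends to $\R$ by noting that $L(A)=\tr(Auu^*)\in\R$ for all $A\in\R^{t\times t}$ forces $uu^*$ to be a real rank-one positive semidefinite matrix, hence $uu^*=vv^*$ for a real unit vector $v$. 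Your all-real alternative via matrix units can be salvaged, but not by quoting Schur's lemma as you do: the commutant of an irreducible real $*$-representation may be $\R$, $\C$ or $\H$, so "irreducible $\Rightarrow$ the central self-adjoint $\pi_L(X_j)$ are scalars" needs the finer statement that the commutant contains no nontrivial projections together with the real spectral theorem. As written, the proposal has the right skeleton but is missing the two arguments that constitute the actual content of the theorem.
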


\begin{proof}
We extend $L$ to $\C\cx^{t\times t}$ by setting
\begin{equation}\label{ext}
L(p+\i q)=\frac12(L(p+p^*)+\i L(q+q^*))
\end{equation}
for $p,q\in\rxt$. This is the unique $\C$-linear extension of $L$ satisfying $L(f^*)=L(f)^*$ for all
$f\in\C\cx^{t\times t}$. 
Let $$M_G^\C:=
\left\{\sum_{j=1}^Np_j^*g_jp_j\mid N\in\N,g_j\in\{1\}\cup G,p_j\in\C\cx^{t\times t}\right\}$$
be the quadratic module generated by $G$ in $\C\cx^{t\times t}$. Then $L$ is
nonnegative on $M_G^\C$. Indeed, given $f=(p+\i q)^*g(p+\i q)$ with
$p,q\in\rrxt$ and $g\in\{1\}\cup G$, we have
$f= (p^*gp+q^*gq)+\i(p^*gq-q^*gp)$. Applying the definition \eqref{ext} of $L$, we obtain
$L(f)=L(p^*gp + q^*gq)\in L(M_G)\subseteq\R_{\ge0}$,
as desired.
For later use let us observe that $M_G^\C$ is archimedean: write $f\in\C\cx^{t\times t}$ as $f=f_1+\i f_2$ with $f_j\in\rrxt$. Then $f_j\in H(M_G)\subseteq
H(M_G^\C)$ and $\i\in H(M_G^\C)$. Hence Proposition \ref{prop:vidav} implies
$H(M_G^\C)=\C\cx^{t\times t}$.

By the Cauchy-Schwarz inequality for semi-scalar products,
\begin{equation}\label{jdef}
J:=\{f\in\C\cx^{t\times t}\mid L(f^*f)=0\}
\end{equation}
is a linear subspace of $\C\cx^{t\times t}$.
Similarly, we see that
\begin{equation}\label{gns}
\langle\overline p,\overline q\rangle:=L(q^\ast p)
\end{equation}
defines a scalar product on $\C\cx^{t\times t}/J$, where
$\overline p:=p+J$ denotes the residue class of $p\in\C\cx^{t\times t}$ modulo $J$.
Let $\cH$ denote the completion of $\C\cx^{t\times t}/J$ with respect to this scalar product.
Note that $\cH\neq\{0\}$ since $1\not\in J$.

We proceed to show $J$ is a left ideal of $\C\cx^{t\times t}$. Let $f\in\C\cx^{t\times t}$.
Since $M_G^\C$ is archimedean, there is some $N\in\N$ with $N-f^*f \in M_G^\C$.
Hence for all $p\in\C\cx^{t\times t}$, we have
\begin{equation}\label{key}
0 \leq L(p^*(N-f^*f)p)\le NL(p^* p).
\end{equation}
This shows that $L(p^*f^*fp)=0$ for all $p\in J$, i.e., $fp\in J$.

Because $J$ is a left ideal, the map
\begin{equation}\label{pidef}
\pi:\C\cx^{t\times t}\to\mathcal B(\cH),\;f\mapsto(\overline p\mapsto\overline{fp})
\end{equation}
is well-defined. Here $\overline p\mapsto\overline{fp}$ stands for the unique
bounded linear extension to $\cH$ of the left multiplication with $\overline f$ on
$\C\cx^{t\times t}/J$, which is well-defined by \eqref{key}.
Using the definition \eqref{gns} of the scalar product, it is easy to see
that $\pi$ is a homomorphism of rings with involution, i.e., a $*$-representation of
$\C\cx^{t\times t}$ on $\cH$. Setting $w:=\overline 1\in\cH$, we observe that
\begin{equation}\label{pil}
L(f)=\langle\pi(f)w,w\rangle
\end{equation}
for all $f\in\C\cx^{t\times t}$.

We claim that the commutant $\pi(\C\cx^{t\times t})'$ of the image of $\pi$ in $\mathcal B(\cH)$
is $\C$. To see this, we take an arbitrary operator $T\in\pi(\C\cx^{t\times t})'$.
Since the commutant is closed under the involution and
$T=\frac{T+T^*}2+\i\frac{T-T^*}{2\i}$, we are reduced to the case $T=T^*$. By the spectral theorem,
$T$ decomposes into projections belonging to $\{T\}''\subseteq\pi(\C\cx^{t\times t})'$. So we can
even assume $T$ is a projection. By way of contradiction, assume $T\neq 0$ and $T\neq 1$. Since
$T\in\pi(\C\cx^{t\times t})'$ and $w$ is a cyclic vector for $\pi$ by construction, it follows
that $Tw\neq 0$ and $(1-T)w\neq 0$. This allows us to define states $L_i$ on $(\srxt,M_G)$ by
$$L_1(f)=\frac{\langle\pi(f)Tw,Tw\rangle}{\|Tw\|^2}
  \qquad\text{and}\qquad
  L_2(f)=\frac{\langle\pi(f)(1-T)w,(1-T)w\rangle}{\|(1-T)w\|^2}$$
for all $f\in\srxt$. One checks that $L$ is a convex combination of $L_1$ and $L_2$. The state $L$
being pure, we obtain $L=L_i$. By \eqref{pil}, this implies
$$\langle\pi(f)w,\la w\rangle=\la\langle\pi(f)w,w\rangle=\langle\pi(f)Tw,Tw\rangle=\langle T\pi(f)w,Tw\rangle=
\langle \pi(f)w,Tw\rangle$$
for all $f\in\C\cx^{t\times t}$, where $\la:=\|Tw\|^2$. In particular, $Tw=\la w$ since $w$
is a cyclic vector for $\pi$. This implies $\la\in\{0,1\}$ since $T$ is a projection, a contradiction.

By \cite[Theorem 3.1]{la}, $\ker\pi=I^{t\times t}$ for an ideal $I$ of $\C\cx$. Since $\ker\pi$ is
closed under the involution, $I$ is closed under complex conjugation. Moreover
$\C\cx/I$ is contained in the center of
$(\C\cx/I)^{t\times t}=\C\cx^{t\times t}/\ker\pi\cong\pi(\C\cx^{t\times t})$
which is $\C$ by the above. Hence $\C\cx/I=\C$ and $\pi(\C\cx^{t\times t})\cong\C^{t\times t}$
as a $C^*$-algebra. In particular, there
exists $x\in\C^n$ such that $I=\{p\in\C\cx\mid p(x)=0\}$.
Actually $x\in\R^n$ since $I=I^*$. Also, $\cH=\pi(\C\cx^{t\times t})w$ is finite-dimensional.

Next we claim that $\pi$ is an irreducible $*$-representation. Indeed, suppose $U$ is a
linear subspace of $\cH$ invariant under every $\pi(f)$ for $f\in\C\cx^{t\times t}$.
Let $P:\cH\to U$ denote the orthogonal projection. It suffices to show that
$P\in\pi(\C\cx^{t\times t})'=\C$, i.e., $\pi(f)P=P\pi(f)$ for each
$f\in\C\cx^{t\times t}$. By the standard trick, we reduce to the case $f=f^*$.
But then $$\pi(f)P=P\pi(f)P=(P\pi(f)P)^*=(\pi(f)P)^*=P\pi(f).$$

As indicated in the commutative diagram below, $\pi$ now induces an irreducible $*$-representation
$\bar\pi$ of $\C^{t\times t}$. This representation is unitarily equivalent to the identity
representation $\io$
\cite[Corollary 2 to Theorem 1.4.4]{ar}, i.e., there is a unitary map $\Ph:\cH\to\C^t$ making the
diagram below commute.
\vspace{-2.1em}
\begin{center}
\begin{tikzpicture}[description/.style={inner sep=2pt}] 
\matrix (m) [matrix of math nodes, row sep=1em, 
column sep=2em, text height=1.5ex, text depth=0.25ex] 
{\C\cx^{t\times t} & C\cx^{t\times t}/\ker\pi & \C\cx^{t\times t}/I^{t\times t} &
(\C\cx/I)^{t\times t} & \C^{t\times t}\\ 
\\
\\
&\mathcal B(\cH) & & \mathcal B(\C^t)\\
&\cH&&\C^t\\
}; 
\path[->,font=\scriptsize] 
(m-1-1) edge[bend left] node[below] {$f\mapsto f(x)$} (m-1-5)
(m-1-1) edge (m-1-2)
(m-1-2) edge[-,double distance=2pt] (m-1-3)
(m-1-3) edge[-,double distance=2pt] (m-1-4)
(m-1-4) edge[-,double distance=2pt] (m-1-5)
(m-1-1) edge node[left,below] {$\pi$} (m-4-2)
(m-1-5) edge[-,double distance=2pt] node[auto] {$\io$} (m-4-4)
(m-1-5) edge node[auto] {$\bar\pi$} (m-4-2)
(m-4-2) edge node[below] {$T\mapsto\Ph T\Ph^*$} (m-4-4)
(m-5-2) edge node[below] {$\Ph$} node[above]{$\scriptstyle\overline 1=w\mapsto u=\Ph(w)$} (m-5-4);
\end{tikzpicture} 
\end{center}
Let $u:=\Ph(w)\in\C^t$. For each $p\in\C\cx^{t\times t}$, we have
\begin{equation}\label{close}
\begin{split}
L(p)&=\langle\pi(p)w,w\rangle=\langle\Ph\pi(p)w,\Ph w\rangle=\langle\Ph\pi(p)\Ph^*u,u\rangle\\
&=\langle\Ph\bar\pi(p(x))\Ph^*u,u\rangle=\langle\io(p(x))u,u\rangle=\langle p(x)u,u\rangle.
\end{split}
\end{equation}
In particular, we get $\tr(Auu^*)=\langle Au,u\rangle=L(A)\in\R$ for all $A\in\R^{t\times t}$.
This implies $uu^*\in\R^{t\times t}$, that is, $uu^*$ is a real positive semidefinite rank one
matrix and can thus be factorized as $uu^*=vv^*$ for some $v\in\R^t$. We can now rewrite
\eqref{close} as
$$L(p)=\langle p(x)u,u\rangle=\tr(p(x)uu^*)=\tr(p(x)vv^*)=\langle p(x)v,v\rangle$$
for all $p\in\rxt$. Also note that $\langle v,v\rangle=L(1)=1$, i.e., $v$ is a unit vector.

It remains to show that $x\in S_G$. To show this, let $g\in G$ and $z\in\R^t$. Choose
$A\in\R^{t\times t}$ with $z=Av$. Then
$$\langle g(x)z,z\rangle=\langle g(x)Av,Av\rangle=\langle A^*g(x)Av,v\rangle=L(A^*gA)\ge 0$$ since
$A^*gA\in M_G$.
\end{proof}

\begin{proposition}\label{thm:ispure}
Let $G\subseteq\srxt$.
For each $x\in S_G$ and each unit vector $v\in\R^t$, the state $L$ on $(\srxt,M_G)$ defined by
$L(p)=\langle p(x)v,v\rangle$ is pure.
\end{proposition}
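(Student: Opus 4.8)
The plan is to verify first that $L$ is a state, and then that it cannot be written as a proper convex combination of two distinct states. Checking that $L$ is a state is routine: $L$ is linear, $L(1)=\langle v,v\rangle=1$ since $v$ is a unit vector, and for $f=\sum_ip_i^*g_ip_i\in M_G$ with $g_i\in\{1\}\cup G$ one has $L(f)=\sum_i\langle g_i(x)\,p_i(x)v,\,p_i(x)v\rangle\ge0$ because $g_i(x)\succeq0$ for $x\in S_G$.

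For purity, suppose $L=\la L_1+(1-\la)L_2$ with $\la\in(0,1)$ and $L_1,L_2$ states, and set $\la_1:=\la$, $\la_2:=1-\la$. The central step is to show that each $L_i$ factors through evaluation at $x$. Given $p\in\srxt$, let $p(x)$ also denote the associated constant matrix polynomial and put $q:=p-p(x)\in\srxt$; then $q(x)=0$, so $q^2=q^*q$ is a hermitian square with $L(q^2)=\langle q(x)^2v,v\rangle=0$. As $L(q^2)=\la_1L_1(q^2)+\la_2L_2(q^2)$ with nonnegative summands, $L_i(q^2)=0$. I will then invoke the Cauchy--Schwarz inequality for the positive semidefinite symmetric bilinear form $(a,b)\mapsto L_i\big(\tfrac12(ab+ba)\big)$ on $\srxt$ (well defined because the Jordan product of symmetric elements is symmetric, and positive semidefinite because $L_i(a^*a)\ge0$ for $a=a^*$, as $a^*a\in M_G$): taking $a=q$, $b=1$ gives $|L_i(p)-L_i(p(x))|^2\le L_i(q^2)\cdot L_i(1)=0$, hence $L_i(p)=L_i(p(x))$ for every $p$.

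So each $L_i$ is the composite of $p\mapsto p(x)$ with the linear form $\ph_i:=L_i|_{\sym\Rtt}$ on the constant symmetric matrices. By Proposition \ref{prop:0}\eqref{it:0psd} the positive semidefinite matrices lie in $M_\emptyset\subseteq M_G$, so $\ph_i$ is nonnegative on the positive semidefinite cone and $\ph_i(I)=L_i(1)=1$; therefore $\ph_i(A)=\tr(\rh_iA)$ for a unique density matrix $\rh_i\in\sym\Rtt$ (positive semidefinite, trace one). The same computation applied to $L$ gives $L(A)=\tr(Avv^*)$, so $L$ corresponds to the rank one density matrix $vv^*$; restricting $L=\la_1L_1+\la_2L_2$ to $\sym\Rtt$ and using uniqueness of the representing matrix yields $vv^*=\la_1\rh_1+\la_2\rh_2$. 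Finally, a rank one density matrix is an extreme point of the convex set of density matrices: if $w\perp v$ then $w^*\rh_iw=0$, hence $\rh_iw=0$, so $\rh_i$ is a scalar multiple of $vv^*$ and equals $vv^*$ by the trace condition. Thus $\rh_1=\rh_2=vv^*$ and $L_i(p)=\tr(p(x)vv^*)=\langle p(x)v,v\rangle=L(p)$ for all $p\in\srxt$, so $L_1=L_2=L$ and $L$ is pure.

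I do not anticipate a serious obstacle. The only point requiring a little care is the localization step: since $\srxt$ is a Jordan algebra and not an associative ring, one must run Cauchy--Schwarz with the symmetrized product rather than by forming a left ideal as in the proof of Theorem \ref{thm:pure}. Alternatively, one could extend $L$, $L_1$, $L_2$ to $\C\cx^{t\times t}$ exactly as there and argue with the usual sesquilinear form, but the symmetric argument above is more economical. Note also that, consistently with the statement, no archimedean hypothesis is needed.
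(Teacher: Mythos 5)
Your proof is correct, and it takes a genuinely different (and arguably cleaner) route than the paper. The paper's argument complexifies, runs the GNS construction for each $L_i$, invokes Lam's theorem to identify $\ker\pi_i$ as $I_i^{t\times t}$, and then uses a dimension count ($\dim\mathcal B(\cH_i)\ge t^2$, hence $\dim\cH_i=t$) to force $J_0=J_1=J_2$, after which it compares the three Gram matrices $G_i$ for the induced scalar products and shows each is the identity. Your argument skips GNS entirely: the Cauchy--Schwarz estimate for the Jordan-product form on $\srxt$, applied to $q=p-p(x)$, shows directly that each $L_i$ factors through evaluation at $x$, reducing everything to a finite-dimensional statement about density matrices, which you settle by the elementary observation that $vv^*$ is extreme in the set of trace-one PSD matrices. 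What the paper's approach buys is uniformity with the proof of Theorem~\ref{thm:pure} (the machinery there is reused); what yours buys is economy -- no complexification, no operator theory, no dimension bookkeeping, just a real bilinear form and a rank-one extremality fact. Both correctly use no archimedean hypothesis. One small stylistic point: the paper normalizes to $2L_0=L_1+L_2$, whereas you keep a general $\la\in(0,1)$; both are standard and your version is if anything slightly more transparent.
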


\begin{proof}
For convenience of notation, set $L_0:=L$.
Suppose there are states $L_1$ and $L_2$ on $(\srxt,M_G)$ such that $2L_0=L_1+L_2$. Extending each
$L_i$ to $\C\cx^{t\times t}$ as in \eqref{ext}, this equality still holds. Define linear subspaces
$J_i\subseteq\C\cx^{t\times t}$ as in \eqref{jdef} by $J_i:=\{f\in\C\cx^{t\times t}\mid L_i(f^*f)=0\}$.
Obviously
\begin{equation}\label{obeq}
J_1\cap J_2=J_0=\{f\in\C\cx^{t\times t}\mid f(x)v=0\}.
\end{equation}
In particular, $\C\cx^{t\times t}/J_0\cong\C^t$ as vector spaces. Hence $\cH_i:=\C\cx^{t\times t}/J_i$
is of dimension at most $t$. The GNS construction for $L_i$ yields a scalar product
$\langle.,.\rangle_i$ on $\cH_i$
defined as in \eqref{gns}
and a $*$-representation $\pi_i:\C\cx^{t\times t}\to\mathcal B(\cH_i)$
(cf.~the proof of the previous theorem).
Again by \cite[Theorem 3.1]{la}, there are ideals $I_i\subsetneq\C\cx$ such that
$\ker\pi_i=I_i^{t\times t}$ and therefore $\C\cx^{t\times t}/\ker\pi_i\cong(\C\cx/I_i)^{t\times t}$.
In particular, $$\dim\mathcal B(\cH_i)\ge\dim(\C\cx^{t\times t}/\ker\pi_i)\ge t^2\dim(\C\cx/I_i)\ge t^2$$ whence
$\dim\cH_i\ge t$ and therefore $\dim\cH_i=t$. Now by \eqref{obeq}, we have $J_0=J_1=J_2$.
Therefore we have three scalar products on $\cH:=\cH_0=\cH_1=\cH_2$, and we find
positive definite matrices $G_1,G_2\in\C^{t\times t}$ such that
$$L_i(q^*p)=\langle\overline p,\overline q\rangle_i=v^*q(x)^*G_ip(x)v$$ where $\overline p=p(x)v$ denotes the residue class of $p$ modulo $J_i$ (this is also true for $i=0$ with $G_0$ being the identity
matrix). Since
$$v^*C^*G_iABv=\langle\overline{AB},\overline C\rangle_i=
L_i(C^*AB)=\langle\overline B,\overline{A^*C}\rangle_i=v^*C^*AG_iBv$$
for all $A,B,C\in\C^{t\times t}$, it follows that $G_iA=AG_i$ for all $A\in\C^{t\times t}$, i.e.,
$G_i\in\C$. More precisely, $G_i=G_i\langle v,v\rangle=v^*G_iv=L_i(1)=1$. Thus $L=L_1=L_2$.
\end{proof}

For $0\neq v\in\R^t$, denote by $[v]$
the linear subspace spanned by $v$ seen as an element of the real projective
space $\mathbb P^{t-1}(\R)$ of dimension $t-1$.

\begin{corollary}\label{cor:proj}
Suppose $G\subseteq\srxt$ and $M_G$ is archimedean. We have a bijection
between $S_G\times\mathbb P^{t-1}(\R)$ and
the set of pure states on $(\srxt,M_G)$ well-defined by associating to each
$(x,[v])\in S_G\times\mathbb P^{t-1}(\R)$, $v$ a unit vector of $\R^t$,
the map $p\mapsto\langle p(x)v,v\rangle$.
\end{corollary}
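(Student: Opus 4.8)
The plan is to show that the stated assignment is well-defined on $S_G\times\mathbb P^{t-1}(\R)$, that it takes values among the pure states, and that it is both surjective and injective onto them; two of these four points are immediate from the results just proved. For well-definedness, observe that a line $[v]\in\mathbb P^{t-1}(\R)$ has exactly the two unit representatives $\pm v$, and $\langle p(x)(-v),(-v)\rangle=\langle p(x)v,v\rangle$ for all $p$, so the map does not depend on the chosen representative. That every such map $p\mapsto\langle p(x)v,v\rangle$ with $x\in S_G$ is a pure state is exactly Proposition~\ref{thm:ispure}, and surjectivity onto the pure states is exactly Theorem~\ref{thm:pure}, which moreover produces a point lying in $S_G$ and a unit vector, so that each pure state does come from a legitimate element $(x,[v])$ of the domain.

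It remains to establish injectivity, and this is where the (modest) work lies. Suppose $(x,[v])$ and $(y,[w])$ yield the same state $L$ on $(\srxt,M_G)$. To recover the base point, test $L$ on the scalar matrix polynomials $qI_t$ with $q\in\R\cx$: since $v,w$ are unit vectors, $q(x)=\langle q(x)I_tv,v\rangle=L(qI_t)=\langle q(y)I_tw,w\rangle=q(y)$, and as this holds for every polynomial $q$ we conclude $x=y$. To recover the line, test $L$ on the constant symmetric matrix polynomials $A\in\sym\R^{t\times t}$: this gives $\tr(Avv^*)=\langle Av,v\rangle=L(A)=\langle Aw,w\rangle=\tr(Aww^*)$ for all symmetric $A$. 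Since $vv^*$ and $ww^*$ are themselves symmetric and the trace pairing $(A,B)\mapsto\tr(AB)$ is nondegenerate on $\sym\R^{t\times t}$, we get $vv^*=ww^*$. Then $w=ww^*w=vv^*w=(v^*w)\,v$ shows that $w$ is a scalar multiple of $v$, hence $[v]=[w]$, and the map is injective.

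The main (and essentially only) obstacle is the second half of injectivity: although a state only sees symmetric matrix polynomials, one has to notice that evaluating it at constant \emph{symmetric} matrices already pins down the rank-one positive semidefinite projector $vv^*$, thanks to nondegeneracy of the trace form on $\sym\R^{t\times t}$. Everything else is bookkeeping built directly on Theorem~\ref{thm:pure} and Proposition~\ref{thm:ispure}.
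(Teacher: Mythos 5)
Your argument is correct and follows essentially the same route as the paper: well-definedness via the sign invariance $\langle p(x)(-v),-v\rangle=\langle p(x)v,v\rangle$ together with Proposition~\ref{thm:ispure}, surjectivity from Theorem~\ref{thm:pure}, and injectivity by testing the state on constant symmetric matrices (the paper uses $E_{ij}+E_{ji}$; you phrase it as nondegeneracy of the trace form on $\sym\R^{t\times t}$, which is the same fact) and on scalar polynomials (the paper uses $p=X_i$; you allow general $qI_t$), finishing with the identity $w=ww^*w=vv^*w=(v^*w)v$. The differences are purely presentational, not a distinct line of reasoning.
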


\begin{proof}
By Proposition \ref{thm:ispure} and $\langle p(x)v,v\rangle=\langle p(x)(-v),-v\rangle$,
the map is well-defined. It is surjective by Theorem \ref{thm:pure}. To show that
it is injective, let $(x,[v]),(y,[w])\in S_G\times\mathbb P^{t-1}(\R)$ with unit vectors
$v,w\in\R^t$ satisfy
\begin{equation}\label{idem}
\langle p(x)v,v\rangle=\langle p(y)w,w\rangle
\end{equation}
for all $p\in\srxt$. Using \eqref{idem} with $p=E_{ij}+E_{ji}$ yields $vv^*=ww^*$. Then $[v]=[w]$ since 
$(v^*w)v=vv^*w=ww^*w=w$. Setting $p=X_i$ in \eqref{idem}, we get moreover $x_i=y_i$ whence $x=y$.
\end{proof}

In general, Theorem \ref{thm:pure} and Corollary \ref{cor:proj} fail badly for
nonarchimedean $M_G$.

\begin{example}
Take $G=\emptyset$, $t=1$ and $n=2$, i.e., consider pure states on $(\R[X,Y],M_\emptyset)$ where
$X$ and $Y$ are two variables and $M_\emptyset$ is the cone of sums of squares of polynomials.
We endow the (algebraic) dual $\R[X,Y]^\vee$ with the weak$^*$ topology and
consider the closed convex cone $M_\emptyset^\vee\subseteq\R[X,Y]^\vee$ of linear forms $L:\R[X,Y]\to\R$ with $L(M_\emptyset)\subseteq\R_{\ge 0}$. Note that each $0\neq L\in M_\emptyset^\vee$ becomes a
state on $(\R[X,Y],M_\emptyset)$ after multiplication with a positive scalar (for $L(1)=0$ implies
$L=0$ by a Cauchy-Schwarz argument).
Choose a polynomial $f$ with $f\ge 0$ on $\R^2$ and $f\notin M_\emptyset$, for instance the
Motzkin polynomial $f=X^2Y^4+X^4Y^2-3X^2Y^2+1$ \cite[Proposition 1.2.2]{ma}.
Since $M_\emptyset$
is closed in $\R[X,Y]$ with respect to the finest locally convex topology (see, e.g.,
\cite[Proposition 4.1.2(2)]{ma} together with \cite[Example 4.1.5]{ma}), the Hahn-Banach separation
theorem \cite[Theorem III.3.4]{ba} yields $L_0\in M_\emptyset^\vee$ with $L_0(f)<0$.
As explained above, we can assume that $L_0$ is a state on $(\R[X,Y],M_\emptyset)$. 

Fix a double sequence $(c_{ij})_{i,j\in\N_0}$ of $c_{ij}>0$ satisfying
$\sum_{i,j}c_{ij}L_0(X^{2i}Y^{2j})=1$.
Now set $$C:=\left\{L\in M_\emptyset^\vee\mid\sum_{i,j\in\N_0}c_{ij}L(X^{2i}Y^{2j})\le1\right\}.$$ Then $C$ is
weak$^*$ closed since
$C=\bigcap_{k\in\N_0}\{L\in M_\emptyset^\vee\mid\sum_{i,j=0}^kc_{ij}L(X^{2i}Y^{2j})\le1\}$
and compact (for if $L\in C$, then
$|L(X^{2i}Y^{2j})|\le\frac1{c_{ij}}$ for all $i,j\in\N_0$, and this implies
by a Cauchy-Schwarz argument similar a priori bounds for the values of $L$ on the other polynomials).
In addition,
both $C$ and $M_\emptyset^\vee\setminus C$ are obviously convex.
Hence $C$ is a \emph{cap} of $M_\emptyset^\vee$ containing
$L_0$ (see \cite[page 80]{ph}).

By the Krein-Milman theorem \cite[Theorem III.4.1]{ba}, there exists
an extreme point $L$ of $C$ such that $L(f)<0$. By Choquet theory \cite[Proposition 13.1]{ph},
$L$ lies on an extreme ray of $M_\emptyset^\vee$. After normalization, $L$ is a
pure state on $(\R[X,Y],M_\emptyset)$. Since $L(f)<0$ and $f\ge0$ on $\R^2$,
$L$ cannot be a point evaluation.
\end{example}

\subsection{Positive semidefinite matrix polynomials}\label{subsec:archPSD}

Now we are ready to give a version of Proposition \ref{prop:0}\eqref{it:0psd} for matrix polynomials
in the archimedean case, originally due to Hol and Scherer \cite[Corollary 1]{hs}.
Using the above classification of pure states, the proof reduces to an easy
separation argument. In contrast to this, the original proof of Hol and Scherer is more involved.

\begin{theorem}[Hol \& Scherer]\label{thm:pos}
Suppose $G\cup\{f\}\subseteq\srxt$ and $M_G$ is archimedean.
If $f\succ 0$ on $S_G$, then $f\in M_G$.
\end{theorem}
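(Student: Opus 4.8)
The plan is to derive Theorem \ref{thm:pos} from the classification of pure states in Theorem \ref{thm:pure} (equivalently Corollary \ref{cor:proj}) together with a Hahn--Banach separation argument, exactly as the introduction promises. Suppose, for contradiction, that $f\succ 0$ on $S_G$ but $f\notin M_G$. Since $M_G$ is archimedean, it contains $N-\sum_iX_i^2$ for some $N$, and more generally every element of the form $c\cdot 1$ with $c$ a sufficiently large scalar dominates any fixed matrix polynomial modulo $M_G$; this is the standard mechanism by which an archimedean quadratic module becomes a spanning cone with $1$ in its algebraic interior. Consequently $M_G$ is a closed convex cone in the finite-dimensional-at-each-level space $\srxt$ equipped with the finest locally convex topology (closedness of archimedean quadratic modules is a routine consequence of the archimedean property plus a standard dimension/Carath\'eodory argument). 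Then the Hahn--Banach/Eidelheit separation theorem yields a linear functional $L:\srxt\to\RR$ with $L(M_G)\subseteq\RR_{\ge 0}$ and $L(f)\le 0$, and $L\neq 0$; rescaling (using $L(1)>0$, which holds because $1$ lies in the interior of $M_G$) we may take $L(1)=1$, so $L$ is a state on $(\srxt,M_G)$ with $L(f)\le 0$.

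Next I would push $L$ to a \emph{pure} state without destroying the inequality $L(f)\le 0$. The set of states is a convex set; because $M_G$ is archimedean, this state space is compact in the weak$^*$ topology (the archimedean bounds give uniform a priori bounds on $|L(p)|$ for each fixed $p$, via a Cauchy--Schwarz estimate as in the last Example of the excerpt). The affine functional $L\mapsto L(f)$ attains its minimum over this compact convex set on a closed face, and by the Krein--Milman theorem that face has an extreme point $L'$, which is then an extreme point of the whole state space, i.e.\ a pure state, still satisfying $L'(f)\le L(f)\le 0$.

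Now apply Theorem \ref{thm:pure}: there exist $x\in S_G$ and a unit vector $v\in\R^t$ with $L'(p)=\langle p(x)v,v\rangle$ for all $p\in\srxt$. In particular $L'(f)=\langle f(x)v,v\rangle$. But $x\in S_G$ and $f\succ 0$ on $S_G$, so $\langle f(x)v,v\rangle>0$, contradicting $L'(f)\le 0$. This contradiction shows $f\in M_G$, proving the theorem.

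I expect the main obstacle to be the soft functional-analytic bookkeeping rather than any single hard computation: namely verifying carefully that $M_G$ is closed in the finest locally convex topology on $\srxt$ (so that Hahn--Banach separates $f$ from it strictly enough to get $L(f)\le 0$ with $L\neq 0$), and that $1$ lies in the algebraic interior of $M_G$ so that the separating functional can be normalized to a genuine state. Both facts are standard consequences of archimedeanity — the first via a Carath\'eodory-type argument reducing to finitely many terms and the finite-dimensionality of bounded-degree truncations, the second because $c-p\in M_G$ and $c+p\in M_G$ for large $c$ and any $p$. Once these are in place, the separation step and the passage to a pure state via Krein--Milman and compactness of the archimedean state space are routine, and the final evaluation step using Theorem \ref{thm:pure} is immediate. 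The elegance here, as the authors emphasize, is that all the real work has already been done in establishing Theorem \ref{thm:pure}.
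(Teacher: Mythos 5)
Your proof matches the paper's approach step for step: show $1$ is an algebraic interior point of $M_G$ (via Vidav's proposition and $4s=(s+1)^2-(s-1)^2$), separate $M_G$ from $\R_{>0}f$ to obtain a state $L$ with $L(f)\le 0$, pass to a pure state using weak$^*$ compactness of the state space and Krein--Milman, and then invoke Theorem~\ref{thm:pure} to evaluate at a point of $S_G$ and reach a contradiction with $f\succ 0$ there. That is exactly the argument in the paper, and your face-of-minimizers phrasing of the Krein--Milman step is a perfectly clean variant.

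However, you have built in one spurious hypothesis that you then flag as the ``main obstacle'': the claim that $M_G$ is \emph{closed} in $\srxt$ with the finest locally convex topology, and that this follows routinely from archimedeanity by a Carath\'eodory-type argument. This is not needed, and in fact it is generally \emph{false}. The Eidelheit--Kakutani separation theorem (\cite[Theorem III.1.7]{ba}, the one the paper invokes) separates two disjoint nonempty convex sets as soon as one of them has an algebraic interior point; no topology on the ambient space and no closedness of $M_G$ enters at all. Moreover, the closedness assertion itself cannot be salvaged: by results of Scheiderer and Kuhlmann--Marshall--Schwartz, for archimedean finitely generated quadratic modules in $\R\cx$ with $\dim S_G\ge 2$ the closure of $Q_G$ in the finest locally convex topology is $\mathrm{Pos}(S_G)$, which strictly contains $Q_G$; so archimedean quadratic modules are typically \emph{not} closed, and a ``dimension/Carath\'eodory'' argument (which works for the sums-of-squares cone in bounded degree, where stability holds) does not transfer here precisely because archimedean modules fail to be stable. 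The fix is simply to delete the closedness claim: once you know $1$ is an algebraic interior point of $M_G$ and $\R_{>0}f\cap M_G=\emptyset$ (which holds because $M_G$ is a cone and $f\notin M_G$), the separation and the rest of your argument go through unchanged.
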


\begin{proof}
In the terminology of Barvinok \cite[Definition III.1.6]{ba}, Proposition
\ref{prop:vidav} together with the identity
$4s=(s+1)^2-(s-1)^2$ shows that
$M_G$ is an archimedean quadratic module if and only if $1$ is an
algebraic interior point of the convex cone $M_G\subseteq\srxt$.
Recall: $f$ is an \textit{algebraic interior point} of $M_G$ if for
every $p \in \srxt$ there exists $\ep >0$ with $f+\ep p\in M_G$.

Suppose $f\notin M_G$. We will find $x\in S_G$ such that $f(x)\not\succ0$. The existence of an algebraic
interior point of $M_G$ allows us to separate the convex
sets $M_G$ and $\R_{>0}f$ by the Eidelheit-Kakutani separation theorem \cite[Theorem III.1.7]{ba}.
More precisely, there exists a state $L$ on $(\srxt,M_G)$ with $L(f)\le0$. The set of all such states
is weak$^*$ compact by Tikhonov's theorem (cf.~the proof of Alaoglu's theorem
\cite[Theorem III.2.9]{ba}). Hence by the Krein-Milman theorem \cite[Theorem III.4.1]{ba}, $L$ can
be chosen to be pure.

By Theorem \ref{thm:pure}, there exists $x\in S_G$ and a unit vector $v\in\R^t$ such that
$L(p)=\langle p(x)v,v\rangle$ for all $p\in\srxt$. In particular, $\langle f(x)v,v\rangle=L(f)\le0$
as desired.
\end{proof}

\begin{corollary}
Suppose $G\subseteq\srxt$ and $M_G$ is archimedean.
For $f\in\srxt$, the following are equivalent:
\begin{enumerate}[\rm (i)]
\item $f\succeq 0$ on $S_G$;
\item $f+\ep\in M_G$ for all $\ep\in\pos$.
\end{enumerate}
\end{corollary}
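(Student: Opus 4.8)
The plan is to derive this corollary directly from Theorem \ref{thm:pos} (Hol--Scherer), which is legitimate since $M_G$ archimedean is exactly the hypothesis needed. The implication (ii) $\Rightarrow$ (i) is the easy direction: if $f+\ep\in M_G$ then $f(x)+\ep I\succeq 0$ for every $x\in S_G$, because $M_G$ consists of matrix polynomials that are positive semidefinite on $S_G$; letting $\ep\to 0^+$ (for each fixed $x$, the set of positive semidefinite matrices is closed) gives $f(x)\succeq 0$, hence $f\succeq 0$ on $S_G$.

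For (i) $\Rightarrow$ (ii), I would fix $\ep\in\pos$ and simply observe that $f+\ep$ is \emph{strictly} positive definite on $S_G$: for $x\in S_G$ and any unit vector $v\in\R^t$ we have $\langle (f(x)+\ep I)v,v\rangle = \langle f(x)v,v\rangle + \ep \ge \ep > 0$, using $f\succeq 0$ on $S_G$. Thus $f+\ep\succ 0$ on $S_G$, and since $(f+\ep)\in\srxt$ (here I am writing $\ep$ for $\ep I$, the scalar matrix polynomial, which is symmetric), Theorem \ref{thm:pos} applies verbatim with $f$ replaced by $f+\ep$ and yields $f+\ep\in M_G$.

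The only point requiring a word of care is the ``strictly positive definite on a possibly noncompact-looking set'' issue: Theorem \ref{thm:pos} requires $f+\ep\succ 0$ on $S_G$ in the sense defined in Section \ref{sec:global}, namely $(f+\ep)(x)\succ 0$ for every $x\in S_G$, and this is precisely what the computation above gives --- no uniformity in $x$ is needed because $\ep$ is a genuine constant added to every eigenvalue. So there is in fact no real obstacle here; the corollary is essentially a restatement of Hol--Scherer obtained by the trivial perturbation trick, and the proof is two short lines invoking Theorem \ref{thm:pos}. If one wanted, one could also remark that the equivalence exhibits $M_G$-membership ``up to arbitrarily small $\ep$'' as the exact algebraic shadow of pointwise positive semidefiniteness, complementing the pure-states picture of Corollary \ref{cor:proj}.
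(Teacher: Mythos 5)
Your proof is correct and is precisely the intended argument: the paper states this corollary without proof as an immediate consequence of Theorem \ref{thm:pos}, and your two directions (the perturbation trick $f+\ep\succ 0$ on $S_G$ for (i)$\Rightarrow$(ii), and pointwise closedness of the positive semidefinite cone as $\ep\to 0^+$ for (ii)$\Rightarrow$(i)) are exactly what is meant.
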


For $t=1$, Theorem \ref{thm:pos} specializes to Putinar's Positivstellensatz \cite{pu}.
To avoid possible confusion, we use the letter $Q$ to denote quadratic modules
in commutative rings with trivial involution. For instance, if $G\subseteq\R\cx$ we denote
the quadratic module generated by $G$ in $\R\cx$ by $Q_G$, i.e.,
$$Q_G=\left\{\sum_{i=1}^Np_i^2g_i\mid N\in\N,g_i\in\{1\}\cup G,p_i\in\R\cx\right\}.$$
Note that $Q_G=M_G$ for $t=1$ but $Q_G\subsetneq M_G$ for $t>1$.

\begin{corollary}[Putinar]\label{cor:put}
Suppose $G\cup\{f\}\subseteq\R\cx$ and $Q_G$ is archimedean.
If $f>0$ on $S_G$, then $f\in Q_G$.
\end{corollary}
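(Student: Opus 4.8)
The plan is to derive Corollary~\ref{cor:put} (Putinar's Positivstellensatz) as the scalar case $t=1$ of Theorem~\ref{thm:pos}. First I would observe that when $t=1$, the ring $\rrxt$ is just $\R\cx$ with trivial involution, so $\srxt=\R\cx$ and the quadratic module $M_G$ coincides with $Q_G$ (as already noted in the excerpt). Under the hypothesis that $Q_G$ is archimedean, Theorem~\ref{thm:pos} therefore applies verbatim.

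The one point that needs a word of justification is the compatibility of the positivity hypotheses. In Theorem~\ref{thm:pos} one assumes $f\succ0$ on $S_G$, i.e.\ $f(x)$ is a positive definite $1\times1$ matrix for every $x\in S_G$; for $1\times1$ matrices this is literally the statement that $f(x)>0$ for all $x\in S_G$, which is exactly hypothesis of Corollary~\ref{cor:put}. Similarly $S_G$ as defined for scalar $G\subseteq\R\cx$ agrees with $S_G$ in the matrix sense, since $g(x)\succeq0$ for a scalar $g$ just means $g(x)\ge0$. Hence the conclusion $f\in M_G=Q_G$ is precisely $f\in Q_G$.

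So the proof is essentially a two-line specialization: ``Apply Theorem~\ref{thm:pos} with $t=1$, noting that for scalars $f\succ0$ on $S_G$ means $f>0$ on $S_G$, and $M_G=Q_G$.'' There is no real obstacle here; the substantive content was already carried out in Theorem~\ref{thm:pure} and Theorem~\ref{thm:pos}. The only thing to be careful about is to state clearly that the archimedean hypothesis on $Q_G$ (a quadratic module in the commutative ring $\R\cx$) is the same as the archimedean hypothesis on $M_G$ in the sense of Section~\ref{sec:arch} when $t=1$, which is immediate from the definition \eqref{eq:arch} since $a^*a=a^2$ for $a\in\R\cx$.

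\begin{proof}
Apply Theorem~\ref{thm:pos} with $t=1$. For $t=1$ the involution on $\R\cx$ is trivial, so $\srxt=\R\cx$, the quadratic module $M_G$ generated by $G$ in $\rrxt$ is exactly $Q_G$, and the archimedean condition \eqref{eq:arch} for $M_G$ reads ``for all $a\in\R\cx$ there is $N\in\N$ with $N-a^2\in Q_G$'', which is the assumed archimedean property of $Q_G$. For a scalar $g\in\R\cx$ and $x\in\R^n$, $g(x)\succeq0$ means simply $g(x)\ge0$, so $S_G$ is the same set in both senses; likewise $f(x)\succ0$ for the $1\times1$ matrix $f(x)$ means $f(x)>0$. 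Thus the hypothesis $f>0$ on $S_G$ of Corollary~\ref{cor:put} is exactly the hypothesis $f\succ0$ on $S_G$ of Theorem~\ref{thm:pos}, and the conclusion $f\in M_G$ of that theorem reads $f\in Q_G$.
\end{proof}
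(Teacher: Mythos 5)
Your proof is correct and matches the paper's approach exactly: the paper states Corollary~\ref{cor:put} as the $t=1$ specialization of Theorem~\ref{thm:pos}, using the observation that $Q_G=M_G$ when $t=1$, and your write-up simply spells out the routine identifications (trivial involution, $\succ0$ vs.\ $>0$, coincidence of the two notions of $S_G$ and of the archimedean condition) that the paper leaves implicit.
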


Clearly, if $Q_G$ is archimedean then $S_G$ is compact. The converse is false even for finite
$G\subseteq\R\cx$ as shown by the Jacobi-Prestel example \cite[Example 6.3.1]{pd}. Nevertheless,
there is an intimate connection between compactness and the archimedean property established by
Schm\"udgen \cite{sm}. To describe his result, we introduce the following notation:
Given a set $G=\{g_1,\dots,g_m\}\subseteq\R\cx$ of $m$ distinct polynomials, let
$\widehat G=\{g_1^{\de_1}\dotsm g_m^{\de_m}\mid0\ne\de\in\{0,1\}^m\}$ denote
the set of the $2^m-1$ nontrivial products of the $g_i$.

\begin{theorem}[Schm\"udgen]\label{prop:schmu}
Suppose $G\subseteq\R\cx$ is finite. Then $S_G$ is compact if and only if the $($multiplicative$)$
quadratic module $Q_{\widehat G}$ is archimedean.
\end{theorem}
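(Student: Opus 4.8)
The plan is to prove Schm\"udgen's theorem \ref{prop:schmu} by relating the archimedean property of $Q_{\widehat G}$ to compactness of $S_G$, using the (commutative, scalar) case of Theorem \ref{thm:pos}, i.e.\ Putinar's Positivstellensatz (Corollary \ref{cor:put}), as the main engine. The forward direction is the easy one: if $Q_{\widehat G}$ is archimedean, then in particular $N-\sum_i X_i^2\in Q_{\widehat G}$ for some $N\in\N$, so every $x\in S_{\widehat G}$ satisfies $\sum_i x_i^2\le N$; but since every $g\in\widehat G$ is a product of elements of $G$, and on $S_G$ all the $g_i$ are nonnegative, we have $S_G\subseteq S_{\widehat G}$, and thus $S_G$ is a bounded set. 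It is also closed (intersection of sets $\{g_i\ge 0\}$), hence compact.

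For the converse, suppose $S_G$ is compact, with $G=\{g_1,\dots,g_m\}$. First I would choose $N\in\N$ large enough that $N-\sum_i X_i^2>0$ on the compact set $S_G$. The goal is to show $N-\sum_i X_i^2\in Q_{\widehat G}$, which by Lemma \ref{lem:arch} (applied in the scalar case $t=1$, where $Q_{\widehat G}=M_{\widehat G}$) forces $Q_{\widehat G}$ to be archimedean. The natural approach is: enlarge $G$ by throwing in the single extra polynomial $g_0:=N-\sum_i X_i^2$, forming $G':=G\cup\{g_0\}$, and observe that $S_{G'}=S_G$ (since $g_0>0$ already on $S_G$, adding the constraint $g_0\ge 0$ changes nothing). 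Now $Q_{\widehat{G'}}$ is archimedean simply because $g_0=N-\sum_i X_i^2\in\widehat{G'}\subseteq Q_{\widehat{G'}}$, so by Putinar's Positivstellensatz any polynomial $>0$ on $S_{G'}=S_G$ — in particular $g_i$ for each $i$, after perturbing, and more importantly a suitable bound polynomial — lies in $Q_{\widehat{G'}}$. The key point to then extract is that each generator $g_0^{\de_0}\prod_i g_i^{\de_i}$ of $Q_{\widehat{G'}}$ that genuinely involves $g_0$ can be rewritten: since $g_0=N-\sum X_i^2$, powers of $g_0$ times products of the $g_i$ can be re-expressed modulo $Q_{\widehat G}$ using the boundedness these products already inherit, allowing one to ``absorb'' the $g_0$ factor. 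Concretely, one shows by induction that $Q_{\widehat{G'}}\subseteq Q_{\widehat G}+ Q_{\widehat G}\cdot g_0$, and then, because $g_0$ itself needs to be shown bounded, one bootstraps: knowing $N-\sum X_i^2\in Q_{\widehat{G'}}$ trivially, one transfers membership back down to $Q_{\widehat G}$ by establishing that $\sigma g_0\in Q_{\widehat G}$ whenever $\sigma\in Q_{\widehat G}$ and $\sigma$ is itself suitably dominated.

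The technical heart — and the step I expect to be the main obstacle — is precisely this transfer argument showing $g_0\in Q_{\widehat G}$, equivalently $Q_{\widehat G}\supseteq Q_{\widehat{G'}}$ up to bounded elements. The standard device here (going back to Schm\"udgen, and streamlined by W\"ormann and by Berr--W\"ormann) is: since each $g_i\ge 0$ on $S_G$ but $g_i$ need not be positive, one first produces, for each $i$, an identity of the form $c^2-g_i^2\in Q_{\widehat G}$ for a constant $c$ (using that $g_i$ is bounded on the compact $S_G$ and Putinar in the enlarged module), which shows each $g_i\in H_{Q_{\widehat G}}(\R\cx)$; then the ring of bounded elements $H_{Q_{\widehat G}}(\R\cx)$, being a ring containing all $X_j$ once one knows $N-\sum X_i^2$ is bounded, equals all of $\R\cx$. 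Making the logic non-circular requires care: one must obtain the boundedness of the $g_i$ \emph{before} invoking archimedeanity of $Q_{\widehat G}$. This is done by a direct Positivstellensatz-type certificate in $Q_{\widehat G}$: for a large constant $R$, the polynomial $R-g_i$ (or $R^2-g_i^2$) is positive on $S_G=S_{\widehat G}$, and a Krivine--Stengle Positivstellensatz (valid without any archimedean hypothesis) gives an identity $p(R^2-g_i^2)=q+ g_i^2 r$ or similar with $p,q,r\in Q_{\widehat G}$ and $p$ having no zeros on $S_G$; combining these across all $i$ and with the standard trick $p(R^2-\sum X_j^2)\in Q_{\widehat G}$ one deduces $N-\sum X_j^2\in Q_{\widehat G}$ for $N$ large. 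I would lay this out as: (1) reduce to showing $N-\sum X_j^2\in Q_{\widehat G}$ for some $N$; (2) use compactness to bound $\sum x_j^2$ and each $g_i(x)$ on $S_G$; (3) apply the abstract (non-archimedean) Positivstellensatz to get denominators-cleared certificates; (4) manipulate these certificates, exploiting that $\widehat G$ is closed under the relevant products, to cancel the denominators and land inside $Q_{\widehat G}$; (5) conclude via Lemma \ref{lem:arch}. Step (4) is where all the combinatorial bookkeeping with the $2^m-1$ products lives, and is the genuinely delicate part.
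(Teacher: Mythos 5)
The paper does not give a proof of this theorem: it is stated as a known result and attributed to Schm\"udgen \cite{sm}, so there is no ``paper's own proof'' against which to compare. Still, your attempt can be assessed on its own merits.

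Your forward direction is correct and essentially the only possible one: $N-\sum_i X_i^2\in Q_{\widehat G}$ forces $S_{\widehat G}=S_G$ to be bounded, hence compact.

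The converse contains a genuine gap, which you yourself flag but do not close. The enlargement $G':=G\cup\{g_0\}$ with $g_0:=N-\sum_i X_i^2$ and the observation $Q_{\widehat{G'}}=Q_{\widehat G}+g_0\,Q_{\widehat G}$ are both fine, but the step you call ``absorb the $g_0$ factor'' or ``cancel the denominators'' is precisely the statement $g_0\in Q_{\widehat G}$, which is the archimedeanity you are trying to prove. Simply writing $Q_{\widehat{G'}}\subseteq Q_{\widehat G}+Q_{\widehat G}\cdot g_0$ does nothing towards that; and the fallback you describe --- producing $c^2-g_i^2\in Q_{\widehat G}$ by ``Putinar in the enlarged module'' and then arguing via $H_{Q_{\widehat G}}$ --- is circular for exactly the reason you half-acknowledge, since Putinar's theorem applied to $Q_{\widehat G}$ presupposes $Q_{\widehat G}$ archimedean, and applied to $Q_{\widehat{G'}}$ only yields membership in $Q_{\widehat{G'}}$, not in $Q_{\widehat G}$.

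The missing ingredient is W\"ormann's lemma (see, e.g., \cite[Theorem 6.1.1]{ma} or \cite[Section 6.3]{pd}): given a Krivine--Stengle certificate $\si\bigl(N-\sum_i X_i^2\bigr)=1+\ta$ with $\si,\ta\in Q_{\widehat G}$, one must produce, by an explicit but delicate iteration on degree, a constant $N'$ with $N'-\sum_i X_i^2\in Q_{\widehat G}$ itself. That iteration is the entire content of the hard direction; it is not combinatorial bookkeeping over the $2^m-1$ products (those only matter for invoking the Positivstellensatz, which requires the full preordering $Q_{\widehat G}$ rather than the quadratic module $Q_G$), but an algebraic manipulation that trades the multiplier $\si$ against powers of $1+\sum_i X_i^2$. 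As written, your proposal identifies the right strategy and the right prior literature but does not supply the argument, so it is a plan rather than a proof.
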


As an important special case, we obtain that for a singleton $G=\{g\}\subseteq\R\cx$,
$S_G$ is compact if and only if $Q_G$ is archimedean. This continues to hold if $G$ has exactly
two elements \cite[Corollary 6.3.7]{pd}. For this and other nontrivial strengthenings of
Schm\"udgen's theorem due to Jacobi and Prestel we refer to \cite[Chapter 6]{pd}. These results allow
us to deduce that $M_G$ is archimedean in such cases by the following proposition.

\begin{proposition}\label{prop:qm}
If $G\subseteq\R\cx$, then $Q_G$ is archimedean if and only if $M_G$ is
archimedean.
\end{proposition}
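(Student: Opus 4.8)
The plan is to reduce both implications to the criterion of Lemma~\ref{lem:arch}: a quadratic module of $\srxt$ (in particular, taking $t=1$, a quadratic module of $\R\cx$) is archimedean exactly when it contains $N-\sum_iX_i^2$ for some $N\in\N$, where for $t>1$ the symbol $N-\sum_iX_i^2$ is read as the scalar matrix polynomial $(N-\sum_iX_i^2)I_t$. Thus everything comes down to transporting a single membership statement of this shape between $\R\cx$ and $\rrxt$, in both directions. Here $G\subseteq\R\cx$ is understood inside $\srxt$ via the diagonal embedding $g\mapsto gI_t$, so that $M_G$ is the quadratic module of $\rrxt$ generated by the scalar matrices $\{gI_t\mid g\in G\}$.

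For ``$Q_G$ archimedean $\Rightarrow$ $M_G$ archimedean'' I would start from a representation $N-\sum_iX_i^2=\sum_j\si_jg_j$ with $g_j\in\{1\}\cup G$ and $\si_j=\sum_kq_{jk}^2$ sums of squares in $\R\cx$, supplied by Lemma~\ref{lem:arch}. Multiplying through by $I_t$, each summand becomes $\si_jg_jI_t=\sum_k(q_{jk}I_t)^*(g_jI_t)(q_{jk}I_t)\in M_G$, hence $(N-\sum_iX_i^2)I_t\in M_G$, and Lemma~\ref{lem:arch} now with matrix size $t$ gives that $M_G$ is archimedean. This direction is entirely routine: scalar sum-of-squares certificates tensor up to matrix ones without loss.

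For the converse I would take a representation $(N-\sum_iX_i^2)I_t=\sum_jp_j^*(g_jI_t)p_j$ in $\rrxt$ with $g_j\in\{1\}\cup G$ and $p_j\in\rrxt$, and push it down to $\R\cx$ by a scalar-valued, positivity-preserving linear map; the trace works cleanly. Since $\tr\bigl(p_j^*(g_jI_t)p_j\bigr)=g_j\tr(p_j^*p_j)$ and $\tr(p_j^*p_j)$ is the sum of the squares of all entries of $p_j$, hence a sum of squares in $\R\cx$, taking traces of both sides of the identity yields $t\bigl(N-\sum_iX_i^2\bigr)\in Q_G$. Adding the sum of squares $(t-1)\sum_iX_i^2$ then gives $tN-\sum_iX_i^2\in Q_G$, and Lemma~\ref{lem:arch} with $t=1$ finishes the argument. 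Alternatively one could conjugate the matrix identity by a fixed standard basis vector $e_1\in\R^t$ in place of taking the trace.

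I do not expect a genuine obstacle here: the whole content is that a sum-of-hermitian-squares-with-scalar-weights representation over $\rrxt$ and a sum-of-squares-with-weights representation over $\R\cx$ convert into one another under the embedding $g\mapsto gI_t$ and the trace. The only points needing a little care are the bookkeeping with $I_t$, the harmless factor $t$ introduced by the trace, and making sure that in each of the two invocations of Lemma~\ref{lem:arch} the expression $N-\sum_iX_i^2$ is interpreted in the intended ring.
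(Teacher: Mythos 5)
Your proof is correct and follows essentially the same route as the paper: the nontrivial direction is handled by applying the trace to a representation of $(N-\sum_iX_i^2)I_t$ and using that $\tr(p_j^*p_j)$ is a sum of squares, then invoking Lemma~\ref{lem:arch}. The only cosmetic difference is that the paper absorbs the factor $t$ by multiplying with $\frac1t=(\frac1{\sqrt t})^2$ instead of adding $(t-1)\sum_iX_i^2$, and it treats the converse direction (which you write out via $g\mapsto gI_t$) as immediate.
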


\begin{proof}
To prove the nontrivial direction, suppose that $M_G$ is archimedean, i.e.,
$N-\sum_{i=1}^nX_i^2=\sum_jp_j^*p_jg_j$ for some $N\in\N$, $p_j\in\rxt$ and $g_j\in G\cup\{1\}$.
Since the trace of a hermitian square $p_j^*p_j$ is a sum of squares in $\R\cx$, it follows
that $N-\sum_{i=1}^nX_i^2=\frac 1t\sum_j\tr(p_j^*p_j)g_j\in Q_G$. Hence $Q_G$ is archimedean
by Lemma \ref{lem:arch}.
\end{proof}

There does not seem to exist a viable generalization of Schm\"udgen's theorem for general
finite $G\subseteq\srxt$. It does not make sense to
consider $\widehat G$ because products of positive semidefinite matrices are not symmetric in general,
let alone positive semidefinite. If $G$ is a singleton, we have $\widehat G=G$ but still $S_G$
compact does not imply $M_G$ archimedean.

\begin{example}
Let $f\in\sym\R\cx^{3\times 3}$ be the diagonal matrix from Example \ref{ex:33}.
Then $S_{\{-f\}}=\emptyset$ is compact but $M_{\{-f\}}$ is not archimedean. Otherwise Theorem
\ref{thm:pos} would imply $-1\in M_{\{-f\}}$ which is not the case as seen in Example \ref{ex:33}.
\end{example}

We now briefly turn to positivity of \emph{not necessarily symmetric} matrix polynomials.
For this we need the following classical lemma \cite[Section 6.3]{br}
(see also \cite[Theorem 5.3]{sw}).

\begin{lemma}[Brumfiel]\label{lem:brumfiel}
Let $\cR$ be a commutative $\Q$-algebra and $Q\subseteq\cR$ a quadratic module.
Then $H_Q(\cR)$ is integrally closed in $\cR$.
\end{lemma}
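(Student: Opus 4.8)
The plan is to show that every element $a\in\cR$ that is integral over $H_Q(\cR)$ already lies in $H_Q(\cR)$, i.e.\ that some integer multiple of $1$ dominates $a^2$ modulo $Q$ (since the involution is trivial, $H_Q(\cR)=\{a\in\cR\mid\exists N\in\N:\;N-a^2\in Q\}$). First I would fix such an $a$ and an integral dependence relation
\begin{equation}\label{eq:intdep}
a^m+c_{m-1}a^{m-1}+\dots+c_1a+c_0=0,\qquad c_0,\dots,c_{m-1}\in H_Q(\cR).
\end{equation}
Because $H_Q(\cR)$ is a subring by Proposition \ref{prop:vidav}, it is harmless to enlarge the $c_i$; in particular I may pick a single $N\in\N$ with $N-c_i^2\in Q$ for all $i$, which by the usual arithmetic in a quadratic module (using $2c_i\preceq 1+c_i^2$) gives $N'\pm c_i\in Q$ for a suitable $N'\in\N$ and every $i$.

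The key step is a boundedness estimate: I claim there is an integer $B\in\N$, depending only on $m$ and $N'$, with $B-a^2\in Q$. The idea is the classical one for proving that integral elements over an archimedean-type subring stay bounded. Suppose first, as a heuristic, that $Q$ were the cone of nonnegative elements in an ordering; then \eqref{eq:intdep} would force $|a|\le 1+\sum_i|c_i|\le 1+mN'=:B_0$, hence $a^2\le B_0^2$. To make this work inside $Q$ purely algebraically, I would multiply \eqref{eq:intdep} by a suitable power of $a$ and manipulate. Concretely: from \eqref{eq:intdep} one gets, for every $k\ge 0$,
\begin{equation}\label{eq:recur}
a^{m+k}=-\sum_{i=0}^{m-1}c_i a^{i+k},
\end{equation}
so every power $a^j$ with $j\ge m$ is an $H_Q(\cR)$-linear combination of $1,a,\dots,a^{m-1}$ with coefficients whose size is controlled by $N'$. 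The cleanest route is then to consider the element $s:=1+a^2+a^4+\dots+a^{2(m-1)}\in\cR$ and to show $(Bs - a^2 s)\in Q$ for an appropriate $B$, by writing the difference as a sum of terms of the form $(N''\pm c_i)\,(\text{square})$ using \eqref{eq:recur}; since $s\succeq 1$ one would want to "divide out'' $s$, which is where a little care is needed. Alternatively — and I expect this is the slicker argument — I would invoke the preorder/semiordering technology available in a commutative $\Q$-algebra: $H_Q(\cR)$ is contained in $H_{Q'}(\cR)$ for every quadratic module $Q'\supseteq Q$, and one can test membership in $H_Q(\cR)$ by passing to all semiorderings (or at least orderings of suitable localizations) containing $Q$; on each such ordering the bound $a^2\preceq B$ with $B=(1+mN')^2$ is immediate from \eqref{eq:intdep}, and then a Positivstellensatz-type / separation argument closes the gap back to $Q$ itself.

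The main obstacle is precisely this last passage from "bounded with respect to every semiordering extending $Q$'' back to "$B-a^2\in Q$''. Getting an honest certificate inside $Q$ (rather than merely in some larger object) is the crux; the honest way is the explicit algebraic manipulation of \eqref{eq:intdep}, multiplying through by $(1+a)^{2}$ or by the square of a high power of $a$ to absorb sign ambiguities, and then carefully bookkeeping so that the leftover terms are visibly in $Q$. Once $B-a^2\in Q$ is established, the conclusion is immediate: $a\in H_Q(\cR)$, so every element integral over $H_Q(\cR)$ lies in $H_Q(\cR)$, i.e.\ $H_Q(\cR)$ is integrally closed in $\cR$. I would cite Brumfiel \cite{br} for the detailed computation and only sketch the bound here.
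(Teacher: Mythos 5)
The paper offers no proof of this lemma at all: it is quoted as a classical fact, with pointers to \cite[Section 6.3]{br} and \cite[Theorem 5.3]{sw}, so there is no in-paper argument to measure yours against. Your setup is the standard one and is correct as far as it goes: fix a monic relation $a^m+c_{m-1}a^{m-1}+\dots+c_0=0$ with $c_i\in H_Q(\cR)$, use Proposition \ref{prop:vidav} (applicable since $\tfrac12\in\cR$) and the squares $(1\pm c_i)^2\in Q$ to get a single $N$ with $N-c_i^2\in Q$ and $N\pm c_i\in Q$ for all $i$ (noting $\tfrac12 q=(\tfrac12)^2(q+q)\in Q$), and then try to convert the relation into a certificate $B-a^2\in Q$.

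The gap is exactly where you locate it, and neither of your two escape routes closes it. Squaring the relation and using $\bigl(\sum_i x_i\bigr)^2\preceq m\sum_i x_i^2$ modulo sums of squares, together with $(N-c_i^2)a^{2i}=a^i(N-c_i^2)a^i\in Q$, gives $mN\sum_{i<m}a^{2i}-a^{2m}\in Q$; from this one can indeed verify $(B-a^2)s\in Q$ for $s=\sum_{i<m}a^{2i}$ and $B$ large. But ``dividing out $s$'' is precisely the operation a quadratic module does not permit: $(B-a^2)s\in Q$ and $s-1\in Q$ do not yield $B-a^2\in Q$, and multiplying the relation by $(1+a)^2$ or by high even powers of $a$ only reproduces statements of the same shape. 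The semiordering detour is worse: nonnegativity of $B-a^2$ under every semiordering containing $Q$ only places it in the saturation of $Q$, and for an arbitrary quadratic module in an arbitrary commutative $\Q$-algebra there is no Positivstellensatz that brings you back into $Q$ itself. What actually closes the argument is a univariate identity: with $u:=a^2\in Q$ and $g(Y):=mN\sum_{i<m}Y^i-Y^m$ one has $g(u)\in Q$, and one needs an explicit representation $B-Y=\sigma_0(Y)+\sigma_1(Y)\,Y+\sigma_2(Y)\,g(Y)$ with $\sigma_j$ sums of squares in $\Q[Y]$ (for $m=1$ this is trivial; for $m=2$ one can take $B-Y=(Y-\tfrac{N'+1}{2})^2+g(Y)$ with $N'=2N$ and $B=N'+\tfrac{(N'+1)^2}{4}$), after which substituting $Y=a^2$ lands every term in $Q$. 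Producing such an identity for general $m$, with rational coefficients since $\cR$ is only a $\Q$-algebra, is the actual content of the computation in \cite{br}; as written, your proof reduces the lemma to its hardest step and then cites Brumfiel for that step --- which is legitimate as a citation (it is all the paper does), but it is not a self-contained proof.
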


Let $G\subseteq\R\cx$ and $f\in\rxt$. The quadratic module generated by $G$ in the commutative ring
$\R[\x,f]$ endowed with the \emph{trivial} involution will be denoted by $Q_G^{f}$. Observe that
$Q_G^{f}\subseteq M_G$ if and only if $f=f^*$.

\begin{theorem}\label{thm:qgf}
Suppose $G\subseteq\R\cx$, $Q_G$ is archimedean and $f\in\rxt$.
If for all $x\in S_G$, all {\em real} eigenvalues of $f(x)$ are positive,
then $f\in Q_G^f$.
\end{theorem}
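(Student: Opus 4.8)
The plan is to reduce the matrix statement to the scalar Putinar Positivstellensatz (Corollary \ref{cor:put}) applied inside the commutative ring $\R[\x,f]$, exploiting that $f$ is a \emph{single} matrix so that $\R[\x,f]$ is commutative. First I would consider the characteristic polynomial $\chi(T)=\det(T\cdot\mathrm{id}-f)=T^t-c_1T^{t-1}+\dots+(-1)^tc_t$ of $f$, whose coefficients $c_i$ lie in $\R\cx$ and which, by Cayley--Hamilton, gives a monic polynomial relation $\chi(f)=0$. Thus $\R[\x,f]$ is a finitely generated commutative $\R\cx$-algebra which is integral over $\R\cx$. The hypothesis says that for every $x\in S_G$, every real root of $\chi_x(T)$ is positive; equivalently, $\chi(T)$ has no real root $\le 0$ over $S_G$, i.e. $\chi(-s)\ne 0$ for all $s\ge 0$ and all $x\in S_G$, and moreover since the nonreal roots come in conjugate pairs, $\chi_x(s)>0$ for all $s\le 0$. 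In particular $(-1)^t\chi_x(0)=\det(-f(x))\cdot(-1)^{?}$ — more usefully, $\chi_x(s)>0$ for all $s\in(-\infty,0]$ and all $x\in S_G$.

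The key step is to turn ``all real eigenvalues positive'' into a statement that a suitable element of $\R[\x,f]$ is $>0$ on a compact set. I would argue that the element $f\in\R[\x,f]$ is itself a bounded element: since $Q_G$ is archimedean there is $N$ with $N-\sum X_i^2\in Q_G$, hence $N-\sum X_i^2\in Q_G^f$, and using the integral dependence of $f$ over $\R\cx$ together with Brumfiel's Lemma \ref{lem:brumfiel} (the ring of bounded elements $H_{Q_G^f}(\R[\x,f])$ is integrally closed) one concludes $f\in H_{Q_G^f}(\R[\x,f])$, so $Q_G^f$ is archimedean. Now $S_{G}^{f}$, the set of $\R$-characters of $\R[\x,f]$ on which every element of $Q_G^f$ is nonnegative, can be described: an $\R$-algebra homomorphism $\R[\x,f]\to\R$ is determined by a point $x\in\R^n$ (where it must land in $S_G$, since $Q_G$ sits inside $Q_G^f$) together with a choice of real eigenvalue $\la$ of $f(x)$ (the image of the generator $f$), because $\chi(f)=0$ forces the image of $f$ to be a root of $\chi_x$, and it must be real since the character is real-valued. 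By the hypothesis, every such $\la$ is positive, so the generator $f$ of $\R[\x,f]$ is $>0$ at every point of $S_G^f$. Applying Putinar's Positivstellensatz (Corollary \ref{cor:put}) in the ring $\R[\x,f]$ with archimedean $Q_G^f$, we get $f\in Q_G^f$, which is exactly the claim.

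The main obstacle I anticipate is justifying rigorously that $Q_G^f$ is archimedean and that the real points (characters) of $\R[\x,f]$ are precisely pairs $(x,\la)$ with $x\in S_G$ and $\la$ a real eigenvalue of $f(x)$. For the archimedean property: the subtle point is that $f$ satisfies a monic relation over $\R\cx\subseteq H_{Q_G^f}(\R[\x,f])$, so integral closedness of the ring of bounded elements (Lemma \ref{lem:brumfiel}, applicable since $\R[\x,f]$ is a commutative $\Q$-algebra) yields $f\in H_{Q_G^f}(\R[\x,f])$; combined with the $X_i$ and constants being bounded and $H_{Q_G^f}$ being a ring (Proposition \ref{prop:vidav}), one gets $H_{Q_G^f}(\R[\x,f])=\R[\x,f]$. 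For the character description: a nonzero $\R$-algebra homomorphism $\varphi\colon\R[\x,f]\to\R$ nonnegative on $Q_G^f$ restricts to one on $\R\cx$ nonnegative on $Q_G$, hence is evaluation at some $x\in S_G$ (using archimedeanity of $Q_G$, so $x$ is real and in $S_G$); then $\varphi(f)=:\la$ satisfies $0=\varphi(\chi(f))=\chi_x(\la)$, so $\la$ is a real eigenvalue of $f(x)$, positive by hypothesis; conversely each such pair gives a valid character. One should double-check that distinct characters over the same $x$ really occur and that Putinar's theorem is being invoked correctly — namely $\R[\x,f]$ is a finitely generated $\R$-algebra and the relevant nonnegativity set in the sense of Corollary \ref{cor:put} is the image of $S_G^f$ — after which the conclusion $f\in Q_G^f$ follows at once since $f>0$ there.
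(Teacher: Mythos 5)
Your proposal follows essentially the same route as the paper: establish integrality of $f$ over $\R\cx$, invoke Brumfiel's Lemma~\ref{lem:brumfiel} to conclude that $Q_G^f$ is archimedean, use the character/eigenvalue dictionary to translate the hypothesis into strict positivity of the generator $f$, and finish with Putinar's Positivstellensatz. The one subtlety you flag — that Corollary~\ref{cor:put} as stated applies to $\R\cx$ rather than to the quotient ring $\R[\x,f]$ — is resolved in the paper exactly as you would hope: it works in the polynomial ring $\R[\x,Y]$ with the minimal polynomial $q_f$ of $f$ adjoined as an equality constraint (your characteristic polynomial would do just as well), notes $Y>0$ on $S_{G\cup\{q_f,-q_f\}}$, applies Corollary~\ref{cor:put} there to obtain $Y\in Q_{G\cup\{q_f,-q_f\}}$, and then substitutes $f$ for $Y$.
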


\begin{proof}
Let $q_f\in\R(\x)[Y]$ be the minimal polynomial of the matrix $f$. Note that
$q_f\in\R[\x,Y]$ by Gau\ss' lemma since $q_f$ divides the (monic) characteristic polynomial of $f$
by the Cayley-Hamilton theorem.

Now
$$
Y> 0 \quad\text{on}\quad \{(x,y)\in S_G\times\R\mid q_f(x,y)=0\}
=S_{G\cup\{q_f,-q_f\}}.
$$
We claim that $Q_{G\cup\{q_f,-q_f\}}=Q_G^Y+\R[\x,Y]q_f$ is archimedean, or
equivalently,
the quadratic module $Q_G^f$ is archimedean in $\R[\x,f]=\R[\x,Y]/(q_f)$. Indeed,
since $f$ is integral
over $\R[\x]$ and $H_{Q_G^f}(\R[\x,f])\supseteq H_{Q_G}(\R\cx)=\R\cx$ is integrally closed,
we have $H_{Q_G^f}(\R[\x,f])=\R[\x,f]$.

By Corollary \ref{cor:put}, $Y\in Q_{G\cup\{q_f,-q_f\}}$. Plugging in $f$ for $Y$ yields
$f\in Q_G^f$.
\end{proof}

\begin{corollary}
Suppose $G\subseteq\R\cx$ and $Q_G$ is archimedean.
For $f\in\rxt$, the following are equivalent:
\begin{enumerate}[\rm (i)]
\item for all $x\in S_G$, all {\em real} eigenvalues of $f(x)$ are nonnegative;
\item $f+\ep\in Q_G^f$ for all $\ep\in\pos$.
\end{enumerate}
\end{corollary}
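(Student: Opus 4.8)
The plan is to deduce this corollary from Theorem \ref{thm:qgf} by the same routine trick used elsewhere in the paper to pass from a strict positivity certificate to a nonnegativity one, namely adding an arbitrarily small positive scalar $\ep$. The implication (ii) $\Rightarrow$ (i) should be essentially immediate: if $f+\ep\in Q_G^f$ for all $\ep>0$, then for each $x\in S_G$ and each real eigenvalue $\mu$ of $f(x)$ with eigenvector $u$, evaluating a typical generator of $Q_G^f$ of the form $\sum_i p_i(x,f(x))^2 g_i(x)$ at $x$ and pairing against $u$ gives $\langle (f(x)+\ep)u,u\rangle\ge 0$, since $g_i(x)\ge 0$ on $S_G$ and the $p_i(x,f(x))$ commute with $f(x)$, hence preserve the eigenspace of $\mu$; this forces $\mu+\ep\ge 0$ for all $\ep>0$, so $\mu\ge 0$.

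For the converse (i) $\Rightarrow$ (ii), fix $\ep\in\pos$ and consider the matrix polynomial $f+\ep I\in\rxt$. By hypothesis, for every $x\in S_G$ all real eigenvalues of $f(x)$ are nonnegative, hence all real eigenvalues of $(f+\ep I)(x)=f(x)+\ep I$ are $\ge\ep>0$, i.e.\ strictly positive. Applying Theorem \ref{thm:qgf} to $f+\ep I$ in place of $f$, we conclude $f+\ep I\in Q_{G}^{f+\ep I}$. The one point that needs a word of justification is that $Q_{G}^{f+\ep I}=Q_{G}^{f}$ as subsets of $\rxt$: indeed $\R[\x,f+\ep I]=\R[\x,f]$ since $\ep I$ lies in (the image of) $\R[\x]$, and this identification of commutative rings with trivial involution carries the quadratic module generated by $G$ to the quadratic module generated by $G$. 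Therefore $f+\ep I\in Q_G^f$, which (writing $\ep$ for $\ep I$ as the paper does) is exactly (ii).

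I expect the only genuine subtlety to be bookkeeping: making sure that the evaluation-and-eigenvector argument in (ii) $\Rightarrow$ (i) is run on a \emph{real} eigenvalue (so that $u$ can be taken real and the pairing $\langle\cdot u,u\rangle$ is meaningful over $\R$), and that one really uses commutativity of $\R[\x,f]$ so that each $p_i(x,f(x))$ maps the real $\mu$-eigenspace of $f(x)$ into itself. Beyond that the proof is a one-line reduction to Theorem \ref{thm:qgf}, strictly parallel to the Corollary following Theorem \ref{thm:pos}.
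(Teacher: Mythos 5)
Your overall plan coincides with the paper's: (i)$\Rightarrow$(ii) by applying Theorem \ref{thm:qgf} to $f+\ep$ and observing $Q_G^{f+\ep}=Q_G^f$ (both points made explicitly by you, tacitly by the paper), and (ii)$\Rightarrow$(i) by an evaluate-at-$x$, apply-to-an-eigenvector argument.

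One step in your (ii)$\Rightarrow$(i) direction is stated in a way that does not actually suffice. You justify $\langle p_i(x,f(x))^2\,g_i(x)\,u,u\rangle\ge 0$ by saying the operators $p_i(x,f(x))$ commute with $f(x)$ and hence preserve the $\mu$-eigenspace. Merely preserving an eigenspace is not enough: an operator acting as a $90^\circ$ rotation on a two-dimensional invariant subspace preserves it, yet its square is $-\mathrm{id}$ there, which would wreck the inequality. What actually makes the argument run is the stronger fact that each $p_i(x,f(x))$ is a \emph{polynomial in $f(x)$}, so on a genuine real eigenvector $u$ with $f(x)u=\mu u$ it acts as the real scalar $p_i(x,\mu)$. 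Then
\begin{equation*}
(\mu+\ep)u=(f(x)+\ep)u=\sum_i p_i(x,f(x))^2 g_i(x)\,u=\Bigl(\sum_i p_i(x,\mu)^2 g_i(x)\Bigr)u,
\end{equation*}
so $\mu+\ep=\sum_i p_i(x,\mu)^2 g_i(x)\ge 0$, and letting $\ep\downarrow 0$ gives $\mu\ge 0$. This is precisely the identity the paper uses. With that one word of precision supplied (``acts as the scalar $p_i(x,\mu)$'' rather than ``preserves the eigenspace''), your proof is correct and is the paper's proof.
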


\begin{proof}(i)$\Rightarrow$(ii) follows from Theorem \ref{thm:qgf}. For the converse, it suffices
to observe that for $p\in Q_G^p$, all real eigenvalues of $p(x)$ are nonnegative for all
$x\in S_G$. Indeed, suppose $p=\sum_jh_j(\x,p)^2g_j$ for finitely many
$g_j\in G\cup\{1\}$ and $h_j\in\R[\x,Y]$. Let $\la\in\R$, $0\neq v\in\R^t$ and $p(x)v=\la v$. Then
$$\la v=p(x)v=\sum_j h_j(x,p(x))^2g_j(x)v=\sum_j h_j(x,\la)^2g_j(x)v$$
whence
$\la=\sum_j h_j(x,\la)^2g_j(x)\ge0$.
\end{proof}

\subsection{Matrix polynomials not negative semidefinite}\label{subsec:archNN}

We conclude this article with an application of Theorem \ref{thm:pos} yielding a version of
Proposition \ref{prop:0}\eqref{it:0nnsd} for matrix polynomials.

\begin{corollary}[Matrizenpolynomnirgendsnegativsemidefinitheitsstellensatz]\label{cor:nn}
\quad\par\noindent
Suppose $G\subseteq\srxt$ and $M_G$ is archimedean.
For $f\in\srxt$, the following are equivalent:
\begin{enumerate}[\rm (i)]
\item $f\not\preceq 0$ on $S_G$;
\item there exist $p_i\in\rxt$ such that
$$
\sum_ip_i^*fp_i\in 1+M_G.
$$
\end{enumerate}
\end{corollary}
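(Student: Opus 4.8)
The implication (ii) $\Rightarrow$ (i) is immediate: if $x\in S_G$ then $g(x)\succeq 0$ for all $g\in G$, so $\sum_i p_i(x)^* f(x) p_i(x) = 1 + m(x)$ with $m(x)\succeq 0$, hence $\sum_i p_i(x)^* f(x) p_i(x) \succeq 1 \succ 0$; this forces $f(x)$ to have a positive eigenvalue (otherwise the left side would be negative semidefinite), so $f\not\preceq 0$ at $x$. The real content is (i) $\Rightarrow$ (ii), and the plan is to reduce it to Theorem \ref{thm:pos} by passing to a scalar matrix polynomial built from $f$.

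The key idea is to consider the symmetric matrix polynomial $F := \tr(f)\cdot I_t - f \in \srxt$, or more precisely to work with the trace. Given $x\in S_G$, the hypothesis $f(x)\not\preceq 0$ means $f(x)$ has a positive eigenvalue, hence $\tr(f(x)) + N > 0$ is not quite what we want — instead the cleanest route is: for each $x\in S_G$, some eigenvalue of $f(x)$ is positive, so for the right auxiliary scalar polynomial we want something that detects "largest eigenvalue $> 0$". The standard trick is to look at $\sum_i E_i^* f E_i$ over a suitable family, but a more direct approach uses the following: since $M_G$ is archimedean, there is $N\in\N$ with $N - \sum X_i^2 \in M_G$, and one shows the set $S_G$ is compact. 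Now consider the continuous function $x\mapsto \lambda_{\max}(f(x))$ on $S_G$; by compactness and (i) it has a positive minimum $\delta > 0$. Then $f + (\text{something}) \succ 0$ on $S_G$ is false in general, so instead I would apply Theorem \ref{thm:pos} not to $f$ itself but after a change: the matrix polynomial $g := \tr(f)I - (t-1)f$ does not obviously work either. The honest reduction: observe $\sum_i p_i^* f p_i \in 1 + M_G$ for suitable $p_i$ is equivalent, by Proposition \ref{prop:vidav} and the quadratic module structure, to $-1$ lying in the quadratic module generated by $G$ and $-f$ in $\rxt$ being false — wait, rather we want to certify positivity. The correct move is: $f\not\preceq 0$ on $S_G$ is equivalent to $S_{G\cup\{-f\}} = \emptyset$ together with... no: $f\not\preceq 0$ on $S_G$ means there is no $x\in S_G$ with $-f(x)\succeq 0$, i.e. $S_{G\cup\{-f\}} = \emptyset$. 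Hence the constant matrix polynomial $1$ (or $I_t$) satisfies $1 \succ 0$ trivially on $S_{G\cup\{-f\}} = \emptyset$. If the quadratic module $M_{G\cup\{-f\}}$ were archimedean, Theorem \ref{thm:pos} would give $-1 \in M_{G\cup\{-f\}}$ (applying it to the matrix polynomial $-1$, which is $\succ 0$ on the empty set), i.e. $-1 = \sum_j q_j^* g_j q_j + \sum_i p_i^*(-f)p_i$ with $g_j \in \{1\}\cup G$, which rearranges to $\sum_i p_i^* f p_i = 1 + \sum_j q_j^* g_j q_j \in 1 + M_G$, exactly (ii).

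So the one remaining point — and the main obstacle — is to show that $M_{G\cup\{-f\}}$ is archimedean. This is where $f\not\preceq 0$ on $S_G$ (not merely $S_{G\cup\{-f\}}=\emptyset$) must be used quantitatively. Since $M_G$ is archimedean we have $N - \sum X_i^2 \in M_G \subseteq M_{G\cup\{-f\}}$, so by Lemma \ref{lem:arch} it suffices to check the archimedean condition for the single element; but $-f$ has been thrown in and we must control $f$ itself, i.e. find $M\in\N$ with $M - f \in M_{G\cup\{-f\}}$ (then $M + f = 2M - (M - f) \in$ ... needs $M+f$ too — actually we need $M\pm$ entries, so we need to bound $f$ from both sides in $M_{G\cup\{-f\}}$). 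The upper bound $M \cdot I_t - f \in M_{G\cup\{-f\}}$: note $M\cdot I_t - f \succeq 0$ on all of $\R^n$ for $M$ large is false since $f$ is a polynomial; however on $S_G$ we can use that $M_G$ is archimedean so $M I_t - f \succ 0$ on $S_G$ for large $M$ (as $\|f\|$ is bounded on the compact $S_G$, but $M I_t - f$ must be positive definite as a polynomial matrix... it is positive definite at each point of $S_G$), hence $M I_t - f \in M_G$ by Theorem \ref{thm:pos} — good, this handles the upper bound using only $M_G$ archimedean. For the lower bound we need $M' I_t + f \in M_{G\cup\{-f\}}$; here we genuinely use $-f$: on $S_{G\cup\{-f\}} = \emptyset$ the condition is vacuous, but $M_{G\cup\{-f\}}$ archimedean is about the ring, so instead argue: $M' I_t + f \succeq 0$ need only hold on $S_{G\cup\{-f\}}$, which is empty, but we cannot apply Theorem \ref{thm:pos} circularly. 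The resolution is the standard one: it suffices to bound each coordinate, and $f + M' I_t \succeq f - (-f)\cdot(\text{psd coefficient}) \ldots$; concretely, write the scalar entries and use that $-f \in M_{G\cup\{-f\}}$ directly contributes, so $M' I_t + f = (M' I_t) + f$ and we want this in the module — using $\langle (M'I_t+f)(x) v,v\rangle \geq 0$ whenever $-f(x)\not\succeq 0$... I would instead invoke Theorem \ref{thm:pos} applied to the archimedean module $M_G$ enlarged: the cleanest is to prove directly that $H_{M_{G\cup\{-f\}}}(\rxt) = \rxt$ by showing $X_i$ and the matrix units are bounded (the matrix units as in Lemma \ref{lem:arch}, the $X_i$ since $M_G \subseteq M_{G\cup\{-f\}}$), which gives archimedeanity \emph{immediately without needing to bound $f$ at all} — Lemma \ref{lem:arch} only requires $N - \sum X_i^2 \in M_{G\cup\{-f\}}$, which holds since it already lies in $M_G$. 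Thus $M_{G\cup\{-f\}}$ is archimedean for free, the hypothesis $f\not\preceq 0$ on $S_G$ enters only through $S_{G\cup\{-f\}} = \emptyset$, and Theorem \ref{thm:pos} applied to the matrix polynomial $-1$ (vacuously $\succ 0$ on $\emptyset$) finishes the proof. The main thing to be careful about is the bookkeeping in rearranging $-1 = \sum_j q_j^* g_j q_j + \sum_i p_i^*(-f) p_i$ into the form $\sum_i p_i^* f p_i = 1 + (\text{element of } M_G)$, and checking that the $g_j$-terms with $g_j\in G$ indeed land in $M_G$.
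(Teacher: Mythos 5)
Your final argument is correct and is exactly the paper's proof: note that (i) is equivalent to $S_{G\cup\{-f\}}=\emptyset$, observe that $M_{G\cup\{-f\}}\supseteq M_G$ is archimedean (by Lemma \ref{lem:arch}, since $N-\sum_i X_i^2$ already lies in $M_G$), apply Theorem \ref{thm:pos} to $-1\succ 0$ on the empty set to get $-1\in M_{G\cup\{-f\}}$, and rearrange. The lengthy detours about bounding $f$ in $M_{G\cup\{-f\}}$ are unnecessary, as you eventually realize; everything needed is already contained in Lemma \ref{lem:arch}.
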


\begin{proof}(ii)$\Rightarrow$(i) is immediate from Proposition \ref{prop:0}\eqref{it:0nnsd}.
For the converse, note that $f\not\preceq 0$ on $S_G$ if and only if
$S_{G\cup\{-f\}}=\emptyset$. In this case, $-1\succ0$ on $S_{G\cup\{-f\}}$. Since $M_G$ and therefore
$M_{G\cup\{-f\}}$ is archimedean, Theorem \ref{thm:pos} implies that $-1\in M_{G\cup\{-f\}}$ which is
exactly what we need.
\end{proof}

\subsection*{Acknowledgments.} We thank Ronan Quarez for interesting 
discussions which led us to the discovery of Theorem 4.
We also appreciate the anonymous referee for his careful reading of the manuscript and his corrections.

\end{document}